\newcommand{\adju}[4]{\xymatrix{#1:#2\ar@<.5ex>[r]^{}&#3:#4\ar@<.5ex>[l]^{}}}
\newcommand{\adj}[2]{\ensuremath{({#1}\dashv{#2})}}
\newcommand{\tT}{\mathbf{Top}}
\newcommand{\Ab}{\mathbf{Ab}}
\newcommand{\sS}{\mathbf{sSet}}
\newcommand{\Set}{\mathbf{Set}}
\newcommand{\Cat}{\mathbf{Cat}}
\newcommand{\Mon}{\mathbf{Mon}}
\newcommand{\Grp}{\mathbf{Grp}}
\DeclareMathOperator{\Res}{Res}
\DeclareMathOperator{\Ind}{Ind}
\DeclareMathOperator*{\colim}{colim}
\DeclareMathOperator*{\hocolim}{hocolim}
\DeclareMathOperator{\Hom}{Hom}
\DeclareMathOperator{\opp}{op}
\DeclareMathOperator{\Simp}{\mathbf{\Delta}}
\DeclareMathOperator{\Fix}{Fix}
\DeclareMathOperator{\Tot}{Tot}
\DeclareMathOperator{\diag}{\mathbf{diag}}
\newcommand{\rnv}[1]{\ensuremath{{\textbf{\textit{q}}_*^{#1}}}}
\newcommand{\rinv}[2]{\ensuremath{{\textbf{\textit{q}}_*^{#1}{(#2)}}}}
\newcommand{\hrnv}[2]{\ensuremath{{\Hom_M({G(#1)},{#2})}}}
\newcommand{\bB}{\mathbf B}
\newcommand{\into}{\hookrightarrow}
\newcommand{\bbZ}{\mathbb Z}
\newcommand{\bbN}{\mathbb N} 
\newcommand{\cC}{\mathcal C}
\newcommand{\frC}{\bf{\mathfrak C}}
\newcommand{\frD}{\bf{\mathfrak D}}
\newcommand{\frX}{\mathfrak X}
\newcommand{\ii}{\mathcal I}
\newcommand{\jj}{\mathcal J}
\newcommand{\cY}{\mathcal Y}
\newcommand{\cX}{\mathcal X}
\newcommand{\bbR}{\mathbb R}
\newcommand{\bbS}{\mathbb S}
\newcommand{\bbD}{\mathbb D}
\newcommand{\cV}{\mathcal V}
\newcommand{\cO}{{\mathbf O}}
 \newtheorem{theorem}{Theorem}[section]
 \newtheorem{lemma}[theorem]{Lemma}
 \newtheorem{proposition}[theorem]{Proposition}
 \newtheorem{corollary}[theorem]{Corollary}
  \newtheorem{example}[theorem]{Example}
 \theoremstyle{definition}
\newtheorem{remark}[theorem]{Remark}
\newtheorem{notation}[theorem]{Notation}%
\begin{document}

\title[An Elmendorf-Piacenza style Theorem for Monoid actions]{Homotopy theory of monoid actions via group actions and an Elmendorf style theorem}

\author{Mehmet Akif Erdal}

\address{Department of Mathematics, Faculty of Arts and Sciences,
	Yeditepe University, 34736, İstanbul, Turkiye}

\email{mehmet.erdal@yeditepe.edu.tr}

\begin{abstract}	Let $M$ be a monoid and $G:\mathbf{Mon} \to \mathbf{Grp}$ be the group completion functor from monoids to groups. Given a collection $\mathcal{X}$ of submonoids of $M$ and for each $N\in \mathcal{X}$ a collection $\mathcal{Y}_N$ of subgroups of $G(N)$, we construct a model structure on the category  of $M$-spaces and $M$-equivariant maps, called the $(\cX,\cY)$-model structure, in which weak equivalences and fibrations are induced from the standard $\mathcal{Y}_N$-model structures on $G(N)$-spaces for all $N\in \mathcal{X}$. We also show that for a pair of collections $(\mathcal{X},\mathcal{Y})$ there is a  small category ${\mathbf O}_{(\mathcal{X},\mathcal{Y})}$  whose objects are $M$-spaces $M\times_NG(N)/H$ for each $N\in \cX$ and $H\in \cY_N$  and morphisms are $M$-equivariant maps, such that the $(\cX,\cY)$-model structure on the category of $M$-spaces is Quillen equivalent to the projective model structure on the category of contravariant ${\mathbf O}_{(\mathcal{X},\mathcal{Y})}$-diagrams of spaces.\end{abstract}

\keywords{monoid action, group action, model structure}
\subjclass[2010]{55U40, 16W22}

\maketitle

\section{Introduction}\label{sec:intro}
For a group $G$ and a collection $\cY$ of subgroups of $G$, the orbit category $\cO_{\cY}$ has  $G$-orbits $G/H$ for $H\in \cY$ as objects and $G$-equivariant maps between them as morphisms. Due to Elmendorf's Theorem  \cite{elmendorf} the category $G$-spaces and the category of contravariant  $\cO_{\cY}$-diagrams of spaces admit equivalent homotopy theories. This provides a great convenience when studying $G$-equivariant homotopy theory since one can reduce it to non-equivariant homotopy theory of associated fixed point systems. The main objective of this paper is to prove a similar result for monoid actions, which reduces the homotopy theory of monoid actions to standard equivariant  homotopy theory of universally associated group actions.

Let $M$ be a discrete monoid and let $q_M:M\to G(M)$ be the universal morphism of its group completion.  Denote by $M\tT$ and $G(M)\tT$ categories of $M$-spaces and $G(M)$-spaces, respectively. The morphism $q_M$ induces a functor $q^*_M:G(M)\tT\to M\tT$ which sends a $G(M)$-space to the same space with the restricted $M$-action.  Due to the universal property of $q_M$, any $M$-space on which $M$ acts by homeomorphisms is isomorphic to an object in the image of $q^*_M$. The functor $q^*_M$ admits a right adjoint, which we denote by ${\rnv{M}}$. For an $M$-space $X$, $\rinv{M}{X}$ can be given as the hom-space $\hom_M(G(M),X)$ of $M$-equivariant maps from $G(M)$ to $X$, where $G(M)$ is considered with the left $M$-action induced by translation after applying $q_M$. The $G(M)$-action on $\hom_M(G(M),X)$ is the one induced by the right-translation on $G(M)$. This functor sorts out ``symmetries'' of $M$-spaces; that is, $M$-invariant subspaces on which $M$ acts by homeomorphisms and maximal with respect to this property.

For each submonoid $N\leq M$, let $q^*_N$ denote the functor induced by $q_N$ and $\rnv{N}$ its right adjoint. We first introduce a category $\cO(M:q)$, which we call \emph{the induced orbit category of $M$}, as the full-subcategory of $M\tT$ of all induced orbits $M\times_N q^*_N(G(N)/H)$ for all $N\leq M$ and $H\leq G(N)$. For simplicity we denote the object $M\times_N q^*_N(G(N)/H)$ in $\cO(M:q)$ by  $(N,H)$.  

By a \emph{pair of collections} $(\cX,\cY)$ we mean the following data: $\cX$ is a family of submonoids of $M$ containing the trivial submonoid, $\cY$ is a set of collections $\cY_N$ of subgroups of $G(N)$, each containing the trivial subgroup for every $N\in \cX$; such that, the full subcategory $\cO_{(\cX,\cY)}$  of all objects $(N,H)$ with $N \in \cX$ and $H \in \cY_N$ is replete in $\cO(M:q)$. Given a pair of collections, there exists  a pair of adjoint functors $$\adju{\Upsilon}{[{\cO_{(\cX,\cY)}}^{\opp},\tT]}{M\tT}{\frX}$$ where $\frX (X)(N,H)=\rinv{N}{\Res^M_N(X})^H$ and $\Upsilon(F)=F((\mathbf{e},\mathbf{e}))$, see Section \ref{sect:model-structures-on-m-spaces}. Our main theorem is the following.
\begin{theorem}\label{thm:Rmodel} Let $\cX$ be a collection of submonoids of $M$ and for each $N\in \cX$, $\cY_N$ be a collection of subgroups of $G(N)$. Then $M\tT$ admits a cofibrantly generated model structure in which a map $f$ is a weak equivalence (resp. fibration) if for each $N\in \cX$ and $H\in \cY_N$, $(\rinv{N}{\Res^M_N(f)})^H$ is a weak homotopy equivalence (resp. Serre fibration). The generating cofibrations and acyclic cofibrations are respectively given by $$M\ii_{(\cX,\cY)}=\{M\times_N q^*_N(G(N)/H)\times i_n \ {\mid} \  N\in \cX, H\in \cY_N, n\in \bbN \}$$ and
	$$M\jj_{(\cX,\cY)}=\{M\times_N q^*_N(G(N)/H)\times j_n \ {\mid} \  N\in \cX, H\in \cY_N, n\in \bbN\},$$
	where  $i_n:\bbS^{n-1}\to \bbD^n$ and $j_n:\bbD^n\to \bbD^n\times [0,1]$ $(\bbS^{-1}=\emptyset)$ are the generating cofibrations and the generating acyclic cofibrations in $\tT$, respectively. 
	
	Moreover, if  $(\cX,\cY)$ is a pair of collections  for $M$, then the adjoint pair $$\adju{\Upsilon}{[{\cO_{(\cX,\cY)}}^{\opp},\tT]}{M\tT}{\frX}$$ is a Quillen equivalence.\end{theorem}
We call this model structure the  \emph{$(\cX,\cY)$-model structure}, and weak equivalences, fibrations and cofibrations in this model structure will be called $({\cX},{\cY})$-weak equivalences, $({\cX},{\cY})$-fibrations and $({\cX},{\cY})$-cofibrations.

This paper is organized as follows. In Section \ref{sect:preliminaries} we first give some preliminary constructions; including, the induced orbit category and the functor $\rnv{M}$. We discuss some basic properties of $\rnv{M}$ and demonstrate these properties on simple examples. We then, in \ref{sssect:topological-prop-of-rinv}, list some topological properties of $\rnv{M}$, including  Lemmas \ref{lem:rinv-preserve-pushouts} and \ref{lem:rinv-is-relatively-small}, which we directly use in the proof of Theorem \ref{thm:Rmodel}. In Section \ref{ssect:model-categorical-preliminaries} we briefly mention model categorical preliminaries; such as, transferring model structures along a right adjoint and standard equivariant model structures. In Section \ref{sect:model-structures-on-m-spaces} we prove our main results.  First we prove a preliminary form, Proposition \ref{prop:preliminarymodels}, then we describe the adjunction $\adj{\Upsilon}{\frX}$ and prove Theorem \ref{thm:Rmodel}. We then proceed in Section \ref{ssect:consequences} with some immediate consequences of  Theorem \ref{thm:Rmodel}; such as, constructions of  $M_{(\cX,\cY)}$-$CW$-complexes and $M_{(\cX,\cY)}$-Whitehead Theorems for these new model categories, see Section \ref{sssect:cwcomplexes} and Corollary \ref{cor:whitehead}. If $(\cX,\cY)$ is a pair of collections  for $M$, then further consequences of Theorem \ref{thm:Rmodel} follow, including classifying spaces for families of collections, Eilenberg-Maclane $M$-spaces and Bredon-like cohomology theories for $M$-spaces, see Sections \ref{sssect:classifying-spaces}, \ref{sssect:eilenberg-maclane-spaces} and \ref{sssect:mcohomology}. Finally in  Section \ref{sssect:remarks-on-applications} we point out relations and applications of our approach on dynamical properties of (irreversible) systems; such as,  attracting sets, periodicity, immortality and time reversal symmetries.

\section{Preliminary constructions and lemmas}\label{sect:preliminaries}
\begin{notation} We use the notation $\Grp$ for the category of groups, $\Mon$ for the category of monoids, $\Set$ for the category of sets, $\sS$ for the category of simplicial sets, $\tT$ for the category of compactly generated weak Hausdorff topological spaces and $\Cat$ for the category of small categories. For any monoid or group $D$ and a category $\frC$ we use $D\frC$ for the category of $D$-objects in $\frC$, i.e., the functor category $[D,\frC]$ when $D$ is regarded as a category with single object.
\end{notation}
As is well known, the forgetful functor $U:\Grp\to \Mon$ admits a left adjoint $G:\Mon\to \Grp$, called \emph{the group completion}. For a given monoid $M$, the group completion $G(M)$ can be constructed as follows: let $F_G(M)$ denote the free group on the elements of $M$, with the `set' inclusion $i:M\to F_G(M)$. Then $G(M)$ is the quotient group of $F_G(M)$ by the normal subgroup generated by the set of words $$\{i(m)i(n)i(m n)^{-1} \ {\mid} \ m,n \in M\}.$$ In other words, $G(M)$ is the quotient of $F_G(M)$ by all relations obtained by the multiplication in $M$, see \cite{malcev}. The universal homomorphism $q_M:M\to G(M)$ is given by the composition of $i$ with the quotient homomorphism $F_G(M)\to G(M)$. 

\subsection{Category of induced orbits}\label{ssect:induced-orbit-category}
Given a submonoid $N\leq M$, the restriction, $\Res_N^M:M\tT\to N\tT$, admits a left adjoint; namely, the induction $\Ind^M_N$, given by $$\Ind^M_N(X)=M\times_N X=M\times X/\sim$$ for an $N$-space $X$ where $ M\times_N X$ is the quotient of $M\times X$ with respect to the equivalence relation generated by $(m n,x)\sim (m,n\cdot x)$.

For a pair $(N,H)$ where $N\leq M$ is a submonoid and $H\leq G(N)$ is a subgroup, the $M$-space $M\times_N q^*_N(G(N)/H)$ is called an \emph{induced orbit} (as its the induction of a $G(N)$-orbit). Here, the underlying space of $q^*_N(G(N)/H)$ is the orbit $G(N)/H$.  We define the \emph{induced orbit category of $M$} as the full-subcategory of $M\tT$ consisting of $M$-spaces $M\times_N q^*_N(G(N)/H)$ with the $M$-action induced by translation on $M$ for every  $N\leq M$ and $H\leq G(N)$. For simplicity, we denote the objects of this category by pairs $(N,H)$.

More generally, one defines the induced orbit category  with respect to a given  pair of collections. Let $\cX$ be a set of submonoids of $M$ containing the trivial submonoid $\mathbf{e}$, for each $N\in \cX$ let $\cY_N$ be a collection of subgroups of $G(N)$ containing the trivial subgroups $\mathbf{e}$ and let $\cY$ be the set of all such collections. Here, by a \emph{collection of subgroups} we mean a subset of the set of all subgroups that is closed under conjugation. One defines $\cO_{(\cX,\cY)}$ as the full-subcategory of $\cO(M:q)$ spanned by objects $(N,H)$ with $N\in \cX$ and $H\in \cY_N$. Such a pair is called a \emph{pair of collections} if $\cO_{(\cX,\cY)}$ is replete in $\cO(M:q)$.

\subsection*{Induced orbits for commutative monoids}
The structure of induced orbits might be complicated for an arbitrary monoid, since the relation $\sim$ in the definition is usually not an equivalence relation. However, for nice commutative monoids (e.g., cancellative ones) its structure is not complicated. In fact, if $M$ is commutative, then  $M\times_N q^*_N(G(N)/H)$ is in the image of $q^*_M$ (i.e., $M$ acts  by bijections), provided that for every $t\in M$ there exists $u\in N$ and $b\in M$ such that $q_M(b)=q_M(t)^{-1}q_m(u)$ (i.e., $q_M(t)^{-1}q_m(u)$ is in the image of $q_N:N\to G(N)$). 
In this case, we can extend the $M$-action on $M\times_N q^*_N(G(N)/H)$ to a $G(M)$-action by $q(t)^{-1}\cdot [m,gH]= [mb,q_M(u)^{-1}\cdot gH]$. For example, $M=\bbN$ and $N=a\bbN$ for some integer $a\geq 1$ and $H=ba\bbZ \leq a\bbZ=G(a\bbN)$, then 
$\bbN\times_{a\bbN} q^*_{a\bbN}(a\bbZ/ba\bbZ)$ is $\bbN$-equivariantly isomorphic to $\bbZ/ba\bbZ$ with left translation, where the isomorphism is given by $$\bbN\times_{a\bbN} q^*_{a\bbN}(a\bbZ/ba\bbZ)\to \bbZ/ba\bbZ: [x,y]\mapsto x+y.$$
Note that if $N$ is trivial, then $M\times_N q^*_N(G(N)/H)$ is $M$-equivariantly isomorphic to $M$ with translation.

\subsection{Right adjoint to the restriction along $q_M$}\label{ssect:adjoints-of-q}
Let  $q_M^*: G(M)\tT  \to M\tT$ be the restriction along $q_M:M\to G(M)$, which sends a $G(M)$-space $A$ to itself with the restricted $M$-action. The universal property of $q_M$ implies that any $M$-space on which $M$ acts by isomorphisms is isomorphic to an object in the image of $q_M^*$. The functor $q_M^*$ is full and faithful, and admits a right adjoint denoted by ${\rnv{M}}$.

The description of ${\rnv{M}}$ can explicitly be given as a hom-set as follows. Consider $G(M)$ as a discrete left $M$-space with the $M$-action given by left translation, i.e., $m \cdot g=q(m)g$ for $m\in M$ and $g \in G(M)$. For an $M$-space $X$, define $\rinv{M}{X}$ as the hom-space $\Hom_M(G(M),X)$ in $M\tT$. The left $G(M)$-action on $\rinv{M}{X}$ is defined by the right translation on $G(M)$, i.e., for $\sigma \in \rinv{M}{X}$ and $g,h\in G(M)$ we have $(g\cdot \sigma) (h)= \sigma(hg)$.  It is straightforward that the natural $M$-equivariant map $\epsilon_X:q^*_M(\rinv{M}{X})\to X:\sigma\mapsto \sigma(1)$ is terminal in $M$-equivariant maps from $G(M)$-spaces to $X$.  If $X=q^*_M(\tilde{X})$ for some $G(M)$-space $\tilde{X}$, then the map $X\to q^*_M( \rinv{M}{X}): x\mapsto (\sigma_x: g \mapsto g\cdot x)$ is the inverse of $\epsilon_X$.

A subspace $U $ of $X$ is $M$-invariant if  it is invariant under the action, i.e., $m\cdot U=U$ for any $m\in M$ (here $m\cdot U=\{m\cdot u\ {\mid}\ u\in U\}$). We have the following observation.
\begin{proposition}\label{prop:imepsilonisMinvariant}
	For an $M$-space $X$, $\epsilon_X(\rinv{M}{X})$ is the maximal $M$-invariant subspace of $X$, i.e., $\epsilon_X(\rinv{M}{X})$  is $M$-invariant and contains every other $M$-invariant subspace.
\end{proposition}
\begin{proof}
	Clearly $\epsilon_X(\rinv{M}{X})$ is $M$-invariant. Assume $U\subseteq X$ is an $M$-invariant subset of $X$; i.e., $m\cdot U=U$ for any $m\in M$. Thus, for every $u\in U$ and $m\in M$,  there exists $v,w\in U$ with $m\cdot v=u$ and $m\cdot u=w$. Therefore, there exists $\sigma:G(M)\to X$ such that $\sigma(1)=u$. But this implies $U\subseteq \epsilon_X(\rinv{M}{X})$.
\end{proof}
On finite sets ${\rnv{M}}$ sends a finite $M$-set $A$ to an $M$-set that is isomorhic to its unique maximal $M$-invariant subset; that is, the maximal subset of $U\subseteq A$ satisfying $m\cdot U=U$ for every $m\in M$. Note that when restricted on the category of finite sets, ${\rnv{M}}$ is naturally isomorphic to the functor given in \cite[Sec. 4.3, Prop. 6]{erdalunlu}, provided that the action is one-sided. The natural isomorphism is induced by the map $M\to G(M):m\mapsto q(m)^{-1}$ on the contravariant-homs.

\begin{proposition}
	Let $X$ be a $M$-space and $U\subseteq X$ is an open  such that  $\exists\ m\in M$ with $\{x\in X\ {\mid}\ m\cdot x\in U\}$ (i.e., preimage of $U$ under $x\mapsto m\cdot x$) is disconnected. Then $\epsilon_X^{-1}(U)$ is disconnected.
\end{proposition}
The proof is straightforward as if $\{x\in X\ {\mid}\ m\cdot x\in U\}=U_1\cup U_2$ for disjoint open sets $U_1,U_2$, then $\{\sigma\in \rinv{M}{X}\ {\mid}\ \sigma(m^{-1})\in U_1\}$ and $\{\sigma\in \rinv{M}{X}\ {\mid}\ \sigma(m^{-1})\in U_1\}$ are disjoint open sets and their union is $\epsilon_X^{-1}(U)$.

Combining the two previous propositions, we can conclude that for an $M$-space $X$, $\rnv{M}$ provides a decomposition of the maximal $M$-invariant subspace of  $X$ into subspaces on which $M$ acts by homeomorphisms.

Consider the following example.
\begin{example}\label{ex:RUS} Let $X=(-\infty ,-1]\cup\bbS^1  \subseteq \bbR^2$ with the $\bbN$-action given by $1\cdot (x,0)=(x+1,0)$ if $x\leq -2$,  $1\cdot (x,0)=(\sin(x\pi/2),\cos(x\pi/2))$ if $x\in [-2,-1]$
	and $\bbN$ acts $(compatibly)$ by $\pi/2$ rotation to counter-clockwise direction on $\bbS^1$, as described in the following figure 
	\[\begin{tikzpicture}[scale=.75]
		\draw (-4,0) --(-1,0);
		\draw [domain=0:360] plot ({cos(\x)}, {sin(\x)});
		
		\node (c) at (0,-1.3) {};
		\node (d) at (1.3,0) {};
		\node (aA) at (-3.5,-.3) {};
		\node (bB) at (-1.5,-.3) {};
		\node (cc) at (-1.6,.2) {};
		\node (dd) at (-2,.7) {};
		
		\path[->,font=\scriptsize]
		(aA) edge node[below]  { } (bB)
		(c) edge[bend right=30] node[right]  { } (d);
	\end{tikzpicture}\]
	Then $ \rinv{\bbN}{X}\cong\bbR \amalg \bbS^1$ where $\bbZ$ acts by translation on $\bbR$ component and by $\pi/2$ rotations on $\bbS^1$ component. The component that is isomorphic to $ \bbS^1$ consist of maps $\sigma:\bbZ\to X$ such that $\sigma(n)\in  \bbS^1$ for every $n\in \bbZ$. On the other hand, the component that is isomorphic to $\bbR$ consist of maps $\sigma:\bbZ\to X$ such that $\sigma(-n)\in  (-\infty ,-1)$ for sufficiently large $n$.
	
	Now let  $V=\{(x,y)\in X\ {\mid} \ x\geq -2\}$ with the $M$-action defined in the same way (here note that $n\cdot V\subseteq V$ for every $n\in \bbN$). Then $ \rinv{\bbN}{V}\cong\bbS^1$ with the rotation action. Note that as the points on the $x$-axis cannot be reversed indefinitely, they vanish after applying $\rnv{\bbN}$.
\end{example}
Note  that restriction to submonoid $N\leq M$ and then applying $\rnv{N}$ provide different symmetry information. It can be easily demonstrated by the following example.
\begin{example}\label{ex:sub} Let $Y=\{y_0,y_1,y_2\}$ and $M=F(a,b)$, the free monoid generated by $a$ and $b$. Consider the $M$ action on $Y$ generated by $a\cdot y_0=b\cdot y_0=y_0$, $a\cdot y_1=y_2$, $b\cdot y_1=y_0$, $a\cdot y_2=y_1$ and $b\cdot y_2=y_2$; see the following figure
	\[\begin{tikzpicture}[node distance=25mm, auto, scale=.5]
		% first automaton
		\node[state]   (sA)               {$y_0$};
		\node[state] (sB) [right of=sA] {$y_1$};
		\node[state] (sC) [right of=sB] {$y_2$};
		\path[->] (sA) edge[loop left] node {$a,b$} ()
		(sC) edge[loop right] node {$b$} ()
		(sB) edge[bend left]  node {$a$} (sC)
		(sC) edge[bend left]  node {$a$} (sB)
		(sB) edge  node {$b$} (sA);
	\end{tikzpicture}\]
	Then $\rinv{M}{Y}=\{y_0\}$ with the trivial $F_G(a,b)$-action and if $N=\langle a\rangle$, then $\rinv{N}{\Res^M_NY}=\{y_0,y_1,y_2\}$ with the $F_G(a)$-action given by $a\cdot y_1=y_2,\ a\cdot y_2=y_1$ and $a\cdot y_0=y_0$. If $K= \langle b\rangle$, then $\rinv{K}{\Res^M_KY}=\{y_0,y_2\}$ with the trivial action. If $L= \langle a^2\rangle$, then $\rinv{L}{\Res^M_LY}=\{y_0,y_1,y_2\}$ with the trivial action. 
\end{example}
The functor $\rnv{M}$, as an invariant of $M$-spaces, is stronger than the fixed points. 
\begin{example}\label{ex:RuR} Let $Z=\{(x,y): xy=0\}\subset\bbR^2$ with the $\bbN$-action given by $1\cdot (x,0)=(x+1,0)$ for every $x\in \bbR$  $1\cdot (0,y)=(0,y({\mid}y{\mid}-1)/{\mid}y{\mid})$ if ${\mid}y{\mid}>1$ and $1\cdot (0,y)=(1-{\mid}y{\mid},0)$ if ${\mid}y{\mid}\leq 1$. See the following figure 
	\[\begin{tikzpicture}[scale=1]
		\draw (-1.5,0) --(1.5,0);
		\draw (0,1) --(0,-1);
		
		\node (c) at (0.12,-1) {};
		\node (d) at (1,-0.12) {};
		\node (c1) at (0.12,1) {};
		\node (d1) at (1,0.12) {};
		
		\node (cc) at (-1.3,.1) {};
		\node (dd) at (-.1,0.1) {};
		
		\node (cc1) at (1.5,.1) {};
		\node (dd1) at (3,0.1) {};	
		\path[->,font=\scriptsize]
		(cc) edge node[below]  { } (dd)
		(c) edge[bend left=55] node[right]  { } (d)
		(c1) edge[bend right=55] node[right]  { } (d1);
	\end{tikzpicture}\]
	We have  $\rinv{\bbN}{Z}\cong \bbR\amalg \bbR\amalg \bbR$ where on each connected component $\bbN$ acts by translation. 
	For every nontrivial submonoid of $\bbN$ the associated fixed point space is empty and for trivial submonoid the fixed point space is $Z$, which is contractible as a topological space. However, $\rinv{\bbN}{Z}$ as a $\bbZ$-space is not contractible.

\end{example}

The interaction between $\rnv{M}$ and ordinary fixed points of group action can be seen as follows.
\begin{proposition}\label{prop:rinvandfixedpoints}
	If $H\leq G(M)$ and $X$ is an $M$-space, then we have a homeomorphism $\hom_M(G(M)/H,X)\cong (\rinv{M}{X})^H$.
\end{proposition}
\begin{proof} 
	The quotient map $p:G(M)\to G(M)/H$ induces an inclusion $p^*:\hom_M(G(M)/H,X)\to \hom_M(G(M),X)$. If $\sigma=p^*(s)$ for some $s:G(M)/H\to X$, then $$(h\cdot( s\circ p))(g)= s\circ p(gh)=s(ghH)=s(gH)=s\circ p(g).$$ So, the image of $p^*$ is $H$-fixed.  Now, if  $\sigma \in (\rinv{M}{X})^H$, then $s_\sigma(gH)=\sigma(g)$ is a well defined element in $\hom_M(G(M)/H,X)$ with $p^*(s_\sigma)=\sigma$. Thus, any  $H$-fixed element in $\rinv{M}{X}$ lies in the image of $p^*$. The topologies also agree since in both sides open sets determined coordinatewise. 
\end{proof}

\subsubsection{Further topological properties of $\rnv{M}$}\label{sssect:topological-prop-of-rinv} 
Here we list some topological properties of ${\rnv{M}}$. These properties, especially Lemmas \ref{lem:rinv-preserve-pushouts} and \ref{lem:rinv-is-relatively-small}, will be crucial in proving our main results.

Observe that $\rinv{M}{X}=\Hom_M(G(M),X)$ is a subspace of $\Hom(G(M),X)$. Thus, if $X$ is Hausdorff, then so is $\rinv{M}{X}$. We also have the following lemma.
\begin{lemma}\label{lem:rinv-is-closed}
	For any $M$-space $X$, $\rinv{M}{X}$ is a  closed subspace of $\Hom(G(M),X)$. In particular, if $X$ is compact, then so is  $\rinv{M}{X}$. 
\end{lemma}
\begin{proof}
	Given $m\in M$, let $\Phi_m:\Hom(G(M),X)\to \Hom(G(M),X)$ be the map given by $\Phi_m(\sigma)(g)=m\cdot \sigma (q(m)^{-1}g)$ for every $\sigma \in \Hom(G(M),X)$ and $g \in G(M)$. Since $M$ acts on $X$ by continuous maps and $G(M)$ is a discrete space and the compositions is continuous, $\Phi_m$ is continuous. Denote by $\Fix(\Phi_m)$ the set of fixed points of $\Phi_m$, which is a closed subspace of $\Hom(G(M),X)$ since it is an equalizer and equalizers are closed \cite[Cor. 2.15]{strickland2009category}. If $\sigma$ is $M$-equivariant, i.e., $\sigma\in \rinv{M}{X}$, then for every $m\in M$ and $g\in G(M)$ we have $$\Phi_m(\sigma)(g)=m\cdot \sigma (q(m)^{-1}g)=\sigma(g) .$$ In other words, $\displaystyle\rinv{M}{X}\subseteq \bigcap_{m\in M}\Fix(\Phi_m)$. Conversely, assume that  $\displaystyle\sigma\in \bigcap_{m\in M}\Fix(\Phi_m)$. Then for every $m\in M$ and $g\in G(M)$ we have $\Phi_m(\sigma)(g)=\sigma(g)$. Thus, $$\sigma(q(m)g)=\Phi_m(\sigma)(q(m)g)=m\cdot \sigma (q(m)^{-1} q(m)g)=m\cdot \sigma (g),$$ i.e., $\sigma$ is $M$-equivariant. Thus, we have $\displaystyle\rinv{M}{X}= \bigcap_{m\in M}\Fix(\Phi_m)$, which  is a closed subspace of $\Hom(G(M),X)$. 
	
	If $X$ is compact, then so is $\Hom(G(M),X)$, and thus $\Hom_M(G(M),X)$.
\end{proof}

For any space $A$ and any $M$-space $X$,  as $\tT$ is cartesian closed,  we have natural homeomorphisms 
$$\Hom(G(M), \Hom(A,X))\cong\Hom(A\times G(M), X )\cong \Hom(A,\Hom(G(M),X))$$ after forgetting the $M$-action on $X$ and $G(M)$. Consider $\Hom(A,X)$ with the pointwise $M$-action. If $f:G(M)\to \Hom(A,X)$ is an $M$-map, then  for every $a\in A$, the map $f_a:G(M)\to X$ with $f_a(g)=f(g)(a)$ is $M$-equivariant. In fact, we have $m\cdot f_a(g)=m\cdot f(g)(a)=f(q(m)g)(a)=f_a(q(m)g)$ for every $m\in M$. Hence, in the following commuting diagram 
\[\xymatrix{\displaystyle
	\Hom_M(G(M), \Hom(A,X))   \ar@{^{(}->}[d] \ar[rr]^-{}& &	\Hom(A,\Hom_M(G(M),X))\ar@{^{(}->}[d]\\
	\Hom(G(M), \Hom(A,X))  \ar[rr]^-{\cong}& &  \Hom(A,\Hom(G(M),X))}.\]
the top horizontal map is a well defined bijection.  Moreover, by Lemma \ref{lem:rinv-is-closed} vertical arrows are closed inclusions, so that the top horizontal map is closed. We obtain the following lemma.
\begin{lemma}\label{lem:powerover}
	For any space $A$ and $M$-space $X$, the natural homeomorphism $$\Hom(G(M), \Hom(A,X))\cong\Hom(A,\Hom(G(M),X))$$ restricts to a natural homeomorphism $$\Hom_M(G(M), \Hom(A,X))\cong \Hom(A,\Hom_M(G(M),X)),$$ where $\Hom(A,X)$ has the pointwise $M$-action.
\end{lemma}
Let $X$ and $Y$ be $M$-spaces and let $Z=X\amalg Y$. Let $\sigma:G(M)\to Z$ be an $M$-map. Then for every $m\in M$ the elements $\sigma(q(m)^{-1})$, $\sigma(1)= m\cdot \sigma(q(m)^{-1})$ and $\sigma(q(m))=m^2\cdot \sigma(q(m)^{-1})$ belong to the same component of $Z$.  Since every element in $G(M)$ can be written as a product of a sequence of elements in the set $\{q(m),\ q(m)^{-1}\ {\mid} \ m\in M\}$,  this implies $\sigma$ factors through either $X$ or $Y$. This shows that  ${\rnv{M}}$ preserves coproducts of $M$-sets. However, it is also straightforward that the canonical map $\rinv{M}{X}\amalg \rinv{M}{Y}\to \rinv{M}{X\amalg Y}$ is open, since any open subset of $X$ or $Y$ is also open in $X\amalg Y$. Thus, we have the following lemma.
\begin{lemma}\label{lem:invariant}
	The functor ${\rnv{M}}$ preserves coproducts in $M\tT$.
\end{lemma}
The lemma above, in particular, implies that if $X$ and $Z-X$ are  $M$-invariant subspaces of $ Z$, then any $M$-map $\sigma:G(M)\to Z$ factors through either $X$  or $Z-X$. We particularly use that  ${\rnv{M}}$ preserves coproducts of $M$-sets.

Note that closed inclusions are regular monomorphisms in $\tT$, \cite[Thm. 3.1]{strickland2009category}. Being a right adjoint,  ${\rnv{M}}$ (which is given by the hom-object $\Hom_M(G(M),-)$ in $M\tT$) sends $M$-equivariant closed inclusions to $G(M)$-equivariant closed inclusions.

\begin{lemma}\label{lem:rinv-preserve-pushouts}
	Let  $X$ be a discrete $M$-space. Then ${\rnv{M}}=\Hom_M(G(M),-)$ preserves pushouts in which one leg is of the form $\imath=id_X\times i_n$, where   $i_n:\bbS^{n-1}\into \bbD^{n}$ is the inclusion of boundary.% In particular, ${\rnv{M}}$ preserves pushouts in which one leg is an element in $M\ii_{(\cX,\cY)}$.
\end{lemma}
\begin{proof}
	Suppose that we have the following pushout diagram in $M\tT$ $$\xymatrix{\displaystyle
		X\times \bbS^{n-1}  \ar[d]_-{\imath} \ar[rr]^-{f}& &	Y\ar[d]^-{\widetilde{\imath }}\\
		X\times  \bbD^{n}  \ar[rr]_-{\widetilde{f }}& &  P}.$$
	Applying ${\rnv{M}}$ to the diagram, we get that $\rinv{M}{P}$ is a cocone for the diagram defined by $\rinv{M}{f}$ and $\rinv{M}{\imath}$. Let $\widetilde{P}$ be the pushout of $\rinv{M}{f}$ and $\rinv{M}{\imath}$, and $\kappa$ be the canonical $G(M)$-map of the pushouts as shown in the following diagram	$$\xymatrix{\displaystyle
		\rinv{M}{X\times  \bbS^{n-1}}\ar[d]_-{\rinv{M}{{\imath}}}\ar[rr]^-{\rinv{M}{f}} & &  \rinv{M}{Y}\ar@/^2.0pc/@[black][rdd]^-{\rinv{M}{\ \widetilde{\imath}\ }}\ar[d]^-{\widetilde{\rinv{M}{ \imath }}}&\\
		\rinv{M}{X\times  \bbD^{n}}\ar@/^{-2.0pc}/@[black][rrrd]_-{\rinv{M}{\ \widetilde{f}\ }}  \ar[rr]_-{\widetilde{\rinv{M}{f}}} & &  \widetilde{P}\ar[dr]^-{\kappa}&\\
		&&& \rinv{M}{P}}$$
	We show that $\kappa$ is a homeomorphism.
	
	Since $\imath$ is a closed inclusion, the pushouts are created in the category of sets, see \cite[Prop. 2.35]{strickland2009category}.  Thus, following the construction in \cite[Prop. 2.35]{strickland2009category}, we can set $$P= X \times(  \bbD^{n}- \bbS^{n-1})  \amalg   Y$$ with the topology so that the map $$p: X \times  \bbD^{n} \amalg   Y \to P$$ that is identity on $ P \subset X \times  \bbD^{n} \amalg   Y $ and $f$ on $X\times  \bbS^{n-1} \subset X \times  \bbD^{n} \amalg   Y$ is a quotient map, and $\widetilde{f }$ and $\widetilde{\imath }$ are restrictions of $p$. The action is defined on each component. Since ${\rnv{M}}$ preserves closed inclusions, $\rinv{M}{{\imath}}$ is also a closed inclusion. As above, we can set  $$\widetilde{P}=\rinv{M}{Y} \amalg \rinv{M}{X\times (\bbD^{n}-\bbS^{n-1})},$$ so that the map $$\widetilde{p}:\rinv{M}{Y} \amalg \rinv{M}{X\times\bbD^{n}}\to \widetilde{P}$$ that is identity on $ \widetilde{P}$ and $\rinv{M}{f}$ on $\rinv{M}{X\times\bbS^{n-1}},$ is a quotient map. The maps $\widetilde{\rinv{M}{ \imath }}$ and $\widetilde{\rinv{M}{ f }}$ are the obvious restrictions. By Lemma \ref{lem:invariant}, $\kappa$ is a bijection and the restrictions  $\kappa{\mid}_{\rinv{M}{X\times (\bbD^{n}-\bbS^{n-1})}}$ and $\kappa{\mid}_{\rinv{M}{Y}}$ are embeddings, where $\kappa{\mid}_{\rinv{M}{Y}}$ is closed.

	Let $V\subseteq \rinv{M}{P}$ be compact. For each $g\in G(M)$,  $\tau_{g}:\rinv{M}{P}\to P:\sigma\mapsto \sigma(g)$ is continuous, so that $\tau_g(V) =: U_g\subseteq P$ is compact. Observe that $(P,Y)$ is a relative cell-complex in $\tT$ (after forgetting the actions). Then, $\widetilde{f}^{-1}(U_g)$ intersect only finitely many  open cells in $X \times(\bbD^{n}- \bbS^{n-1})$,  as otherwise  we can find an open cover for $U_g$ without a finite subcover (see also \cite[Prop. 4.10]{hirschhorn2019quillen}). Thus, $\widetilde{f}^{-1}(U_g)$ can be expressed as a union of a compact subset $K_g$ of $X \times \bbD^{n} $ and a subset $C_g$ of  $\imath(X \times \bbS^{n-1})$. Here $K_g$ is chosen as the disjoint union of all closed discs whose interior intersects with $\widetilde{f}^{-1}(U_g)$, which is compact as it is disjoint union of finitely many closed discs. Let $K=(\prod_{g\in G(M)}K_g)\cap 	\rinv{M}{X\times  \bbD^{n}}$ where both spaces in the intersection are regarded as subsets of $\hom(G(M),X\times \bbD^n)$. By Lemma \ref{lem:rinv-is-closed} $K$ is a compact subset of $\rinv{M}{X\times  \bbD^{n}}$. Let $C=\rinv{M}{\ \widetilde{f}\ }^{-1}(V)-K$. Then $C$ is contained in the image of $\rinv{M}{\imath}$. In fact, if $\sigma\in C$, then for some $g\in G(M)$, $\sigma(g)\in \imath(X \times \bbS^{n-1})$, and since the action is defined only on the first coordinate, $\sigma(g)\in \imath(X \times \bbS^{n-1})$ for every $g\in G(M)$. Therefore, $\sigma $ belongs to the image of $\rinv{M}{\imath}$. 
	Since both $\widetilde{\rinv{M}{\imath}}$ and $\rinv{M}{\ \widetilde{\imath}\ }$ are closed inclusions,  $\widetilde{\rinv{M}{\imath}}(\rinv{M}{\ \widetilde{\imath}\ }^{-1}(V))=\kappa^{-1}(V)\cap \widetilde{\rinv{M}{\imath}}(\rinv{M}{Y})$ is compact. Due to the commutativity of above diagram and ${\rinv{M}{\imath}}$ being an inclusion, we have $\widetilde{\rinv{M}{f}}(C)\subseteq \kappa^{-1}(V)\cap \widetilde{\rinv{M}{\imath}}(\rinv{M}{Y})$. Since $K$ is compact, so is $\widetilde{\rinv{M}{f}}(K)$. Moreover, $\kappa^{-1}(V)=(\kappa^{-1}(V)\cap \widetilde{\rinv{M}{\imath}}(\rinv{M}{Y}))\cup\widetilde{\rinv{M}{f}}(K)$. Therefore, $\kappa^{-1}(V)$ is compact, i.e.,  $\kappa$ is proper. A proper continuous bijection in $\tT$ is a homeomorphism (see   \cite[Prop. 3.17]{strickland2009category} and its proof).
\end{proof}

The functor ${\rnv{M}}$ does not preserve sequential colimits in general, not even in the case when the underlying maps are inclusions. For example, let $M=\bbN$ and for each $n\in \bbN$ let $[\mathbf n]=\{k\in \bbN \ {\mid} \  k\leq n\}$ be the set with $\bbN$-action given by $1\cdot k=k-1$ for each $k\in [\mathbf n]-\{0\}$ and $1\cdot 0=0$. The colimit of these inclusions $[\mathbf n] \into [\mathbf{n+1}]$ is $\bbN$ with $\bbN$-action  $1\cdot k=k-1$ for all $k\in \bbN-\{0\}$ and $1\cdot 0=0$. We have $\rinv{\bbN}{[\mathbf n]}=\{0\}$ with trivial $\bbZ$-action for every $n$. Hence $\lim_{n}\rinv{\bbN}{ [\mathbf n]}=\{0\}$ with trivial $\bbZ$-action, but $\rinv{\bbN}{ \lim_{n}[\mathbf n]}=\{0\}\amalg \bbZ$ with translation on $\bbZ$ component trivial action on $\{0\}$. However, for every pair $({\cX},{\cY})$ the inclusions $[\mathbf n] \into [\mathbf{n+1}]$ are not $({\cX},{\cY})$-cofibrations.  In fact, for the inclusion $[\mathbf 1] \into [\mathbf{2}]$ the left lifting property against $[0,1]\to *$ fails, where  $[0,1]$ is considered with the $\bbN$-action given by $1\cdot x=0$ for every $x\in [0,1]$. Note that $[0,1]\to *$ is an acyclic $({\cX},{\cY})$-fibration for every pair $({\cX},{\cY})$. If a lift $h$ exists in the following commuting diagram 
$$\xymatrix{
	[\mathbf{1}]\ar[d]_{i} \ar[r]^{g} &	[0,1]\ar[d]^{}\\
	[\mathbf{2}]\ar[r]^{}\ar[ru]^{h} & \ast},$$
such that $i$ and $g$ are inclusions of subspaces, then $h(i(1))=h(1)=g(1)=1$. But then  $1\cdot h(2)=1$, which is not possible in $[0,1]$.

On the other hand, ${\rnv{M}}$ preserves sequential colimits that are relevant to constructions of cellular extensions for generating maps in $M\ii_{(\cX,\cY)}$ and $M\jj_{(\cX,\cY)}$ of Theorem \ref{thm:Rmodel}. We have the following lemma. 
\begin{lemma}\label{lem:rinv-is-relatively-small}
	Let $M$ be a   monoid and $(\cX,\cY)$  be a pair of collections. Then, given an infinite regular cardinal $\lambda$, ${\rnv{M}}$ preserves sequential colimits of $\lambda$-sequences in which underlying maps are pushouts of maps in $M\ii_{(\cX,\cY)}$ or $M\jj_{(\cX,\cY)}$.
\end{lemma}
\begin{proof}
	Let $\lambda$ be an infinite regular cardinal. Let $X:\lambda\to M\tT$ be a $\lambda$-sequence where for each  $\alpha<\lambda$ we have the following pushout square
	$$\xymatrix{\displaystyle
		M\times_{N} q^*_N(G(N)/H) \times \bbS^{n-1}  \ar[d]_{} \ar[r]^{} &		X_{\alpha}\ar[d]^{\jmath_\alpha}\\
		\displaystyle	M\times_{N} q^*_N(G(N)/H) \times \bbD^{n}  \ar[r]^{} & 	X_{\alpha+1}}.$$
	Note that for every $\alpha$ the map $\jmath_\alpha$ is a closed inclusion, implying that the $M$-map $\imath_\alpha:X_{\alpha}\to\displaystyle\colim_{\alpha<\lambda}X_{\alpha}$ is a closed inclusion, e.g., see \cite[Lem. 3.3]{strickland2009category}. Since $\rinv{M}{\displaystyle\colim_{\alpha<\lambda}X_{\alpha}}$ is a cocone for $\rinv{M}{X}$, we have a canonical $G(M)$-map of the colimits  $$\kappa:\displaystyle\colim_{\alpha<\lambda}\rinv{M}{X_{\alpha}}\to\rinv{M}{\displaystyle\colim_{\alpha<\lambda}X_{\alpha}}.$$ We show that $\kappa$ is a homeomorphism.

	We first show that $\kappa$ is a  bijection. It is enough to show that every $M$-map $\varsigma:G(M)\to\displaystyle\colim_{\alpha<\lambda}X_{\alpha}$ factors through the inclusion  $\imath_\beta:X_{\beta}\into\displaystyle\colim_{\alpha<\lambda}X_{\alpha}$ for some $\beta < \lambda$. Assume this is not the case. Then  there exist elements $g$ and $h$ in $G(M)$ and ordinals $\alpha < \beta < \lambda$ such that $\varsigma(g)\in \imath_\alpha(X_{\alpha})$ and $\varsigma(h)\in\imath_\beta(X_{\beta})-\imath_\alpha(X_{\alpha}).$ By the first paragraph of the proof of Lemma \ref{lem:rinv-preserve-pushouts}, we have $\imath_{\alpha+1}(X_{\alpha+1})-\imath_\alpha(X_{\alpha})$ is invariant under the $M$-action on $\imath_{\alpha+1}(X_{\alpha+1})$. By transfinite induction, the difference $\imath_\beta(X_{\beta})-\imath_\alpha(X_{\alpha})$  is invariant under the $M$-action on $\imath_\beta(X_{\beta})$. Since $\imath_\alpha(X_{\alpha})$  is also invariant under the  $M$-action, by Lemma \ref{lem:invariant}  we get that $\varsigma$ factors through either $\imath_\alpha(X_{\alpha})$ or $\imath_\beta(X_{\beta})-\imath_\alpha(X_{\alpha})$. This is a contradiction, and therefore, $\kappa$ is a bijection.

	Let  $A$ be a compact Hausdorff space and let $\Hom(A,X_{\alpha})$ and $\Hom(A,\displaystyle\colim_{\alpha<\lambda}X_{\alpha})$ be $M$-spaces with pointwise actions. We have the following commuting diagram
	$$\xymatrix{
		\displaystyle\colim_{\alpha<\lambda}[\Hom(A,\rinv{M}{X_{\alpha}})]  \ar[d]_{f} \ar[rr]^-{t} &&\Hom(A,\displaystyle\colim_{\alpha<\lambda}(\rinv{M}{X_{\alpha}}) \ar[d]^{\Hom(A,\kappa)} \\ 	\displaystyle\colim_{\alpha<\lambda}[\rinv{M}{\Hom(A,(X_{\alpha}))}]\ar[d]_{e}&&\Hom(A,\rinv{M}{\displaystyle\colim_{\alpha<\lambda}X_{\alpha}})\ar[d]^{d} \\
		\rinv{M}{\displaystyle\colim_{\alpha<\lambda}\Hom(A,(X_{\alpha}))}  \ar[rr]^-{s}&& \rinv{M}{\Hom(A,\displaystyle\colim_{\alpha<\lambda}X_{\alpha})}}.$$
	Here, $t$ and $e$ are obvious canonical maps of colimits,  $s$ is the map induced by the canonical map of the colimits, and the maps $d$ and $f$ are obtained from the map given in Lemma \ref{lem:powerover}.  Since $\Hom(A,-)$ is a right adjoint, $\Hom(A,\kappa)$ is injective. By Lemma \ref{lem:powerover} the maps $d$ and $f$ are homeomorphisms.  Being a right adjoint, ${\rnv{M}}$  preserves regular monomorphisms, so that $\rinv{M}{X}$ is a sequence of closed inclusions. By \cite[Lem. 3.8]{strickland2009category}, $\Hom(A,-)$ preserves colimits of $\lambda$-sequences whose underlying maps are closed inclusions. Thus, $t$ is a homeomorphism. The canonical map $\displaystyle\colim_{\alpha<\lambda}\Hom(A,X_{\alpha})\to \Hom(A,\displaystyle\colim_{\alpha<\lambda}X_{\alpha})$ is an $M$-equivariant bijection and since $G(M)$ is discrete ${\rnv{M}}=\hrnv{M}{-}$ preserves $M$-equivariant bijections, so that $s$ is a bijection. By the previous paragraph, combined with the fact that $\Hom(A,X)$ is a $\lambda$-sequence of closed inclusions, $e$ is a bijection.  This implies $\Hom(A,\kappa)$ is a bijection. Since this is true for every compact Hausdorff space $A$ and in $\tT$ every space is compactly generated, $\kappa$ is an homeomorphism. The proof for sequences of relative $M\jj_{(\cX,\cY)}$-cell complexes is similar.
\end{proof}

\subsection{Model categorical preliminaries}\label{ssect:model-categorical-preliminaries}
A model structure on a category is a choice of three distinguished classes of morphisms called weak equivalences, cofibrations and fibrations, which satisfy certain axioms that are shaped in a way to allow us `to do homotopy theory' similar to the classical homotopy theory of spaces. Categories with model structures have wide variety of examples and applications, from various different areas of mathematics. Original definition of a model category is due to Quillen \cite{quillen}.

In this paper, we conventionally assume that a model category is a bicomplete category with a model structure. Note that the original definitions of Quillen only requires having finite limits and finite colimits. Morphisms that are simultaneously fibrations (resp. cofibrations) and weak equivalences are called acyclic fibrations (resp. acyclic cofibrations). An object $X$ is fibrant if the unique morphism from $X$ to the terminal object is a fibration. Dually, $X$ is cofibrant if the unique morphism from the initial object to $X$ is a cofibration. 
We refer to \cite{dwyer}, \cite{hovey} and \cite{hirschhorn} for various properties, examples and applications of model categories. More specifically, for details on cofibrant generation we refer to  \cite[Ch. 15]{mayponto} and \cite[Ch. 11]{hirschhorn}, and for accessible model categories we refer to  \cite[Sec. 3.1 and Def. 3.1.3, 3.1.6]{riehlinduced}. 

\subsubsection{Transfer arguments for model structures}\label{sssect:transfer-arguments}
Suppose that we are given a pair of adjoint functors $\adju{F}{\frD}{\frC}{G}$ between bicomplete categories $\frC$ and $\frD$, such that $\frD$ is a cofibrantly generated model category with the generating cofibrations $\ii$ and the generating acyclic cofibrations $\jj$. Declare $f$ to be a weak equivalence (resp. fibration) in $\frC$ if $G(f)$ is a weak equivalence (resp. fibration) in $\frD$. If we let $F\ii$ and $F\jj$ are images of $\ii$ and $\jj$ under $F$, then a model structure on $\frC$ exists if
\begin{enumerate}
	\item[\textbf{(i)}]  $G$ takes relative $F\jj$-cell complexes to weak equivalences.
	\item[\textbf{(ii)}] both $F\ii$ and $F\jj$ permit the small object argument.
\end{enumerate}
This model structure is also cofibrantly generated  with the generating cofibrations $F\ii$ and the generating acyclic cofibrations $F\jj$, see \cite[Sec. 11.3]{hirschhorn}. The condition \textbf{(i)} holds if  $\frC$ has a fibrant replacement functor and has functorial path objects for fibrant objects and \textbf{(ii)} holds if $F$  preserves small objects, see \cite[Sec. 2.5 and 2.6]{moerdijkberger}. Here, an object $x$ in a category $\frC$ is \emph{small} means it is $\kappa$-small for some regular cardinal $\kappa$, i.e., the functor ${\frC}(x,-)$ preserves $\kappa$-filtered colimits, see \cite{adamek}.  In fact for \textbf{(ii)} to hold it is enough that $F$ preserves smallness relative to $F\ii$ and $F\jj$; equivalently, for a given regular cardinal $\lambda$, $G$ preserves sequential colimits of $\lambda$-sequences in which underlying maps are pushouts of maps in $F\ii$ (resp. $F\jj$), see \cite[Ch. 15.1]{mayponto}. 

\subsubsection{Standard model structures for group actions}\label{sssect:standard-equivariant-model}
The equivariant homotopy theory of group actions has been highly developed since 1970's and provides lots of tools for the systematic study of group actions. It has been crucial in the proof of the Arf-Kervaire invariant one problem \cite{hill2016nonexistence}. Equivariant homotopy theory is still instrumental in  many areas such as algebraic $K$-theory \cite{nikolaus2018topological,dotto2020real} and Mathematical Physics  \cite{huerta2019real,sati2020equivariant}. For general references on the subject we refer to  \cite{elmendorf}, \cite{piacenza}, \cite{may}, and to \cite{stephan,guillou2010enriched} for generalizations. 

Let $G$ be a group. For a collection $\cY$ of subgroups of $G$ (i.e., a set of subgroups closed under conjugation), there exists a cofibrantly generated model structure; namely, the $\cY$-model structure, on the category $G\tT$. The weak equivalences (resp. fibrations) in this model structure are $\cY$-equivalences (resp. $\cY$-fibrations) where  $f:X\to Y$ in $G\tT$ is a \emph{$\cY$-equivalence} (resp. a \emph{$\cY$-fibration})  if for every $H$ in $\cY$ $ f^H: X ^H\rightarrow Y ^H$ is a weak homotopy equivalence (resp. Serre fibration). Besides, due to Elmendorf's theorem \cite{elmendorf}, there is a Quillen equivalence $$\adju{\Phi}{[{\cO_{\cY}}^{\opp},\tT]}{G\tT}{\Psi}$$ where $\cO_{\cY}$ is the orbit category for $\cY$ and $[{\cO_{\cY}}^{\opp},\tT]$ has the projective model structure.

This model structure and the Elmendorf's theorem is generalized in  \cite{stephan} to other model categories (including $\sS$) for which the fixed point functors satisfy a condition (called the cellularity condition). In all cases, the generating cofibrations and the generating acyclic cofibrations are given by $G \ii_\cY=\{G/H\times i \ {\mid} \  n\in \bbN, H\in \cY, i\in \ii \}$ 	and $G \jj_\cY=\{G/H\times j \ {\mid} \  n\in \bbN, H\in \cY, j\in \jj\} ,$ respectively, where $\ii$ and $\jj$ are the generating cofibrations and the generating acyclic cofibrations of the underlying category.

\section{Proofs of main results and consequences}\label{sect:model-structures-on-m-spaces}
The following  is a preliminary version of Theorem \ref{thm:Rmodel}.
\begin{proposition}\label{prop:preliminarymodels}
	Let $N\leq M$ and $\cY_N$ be a collection of subgroups of $G(N)$. Then,
	
	{\bf I.} \ $N\tT$ admits a cofibrantly generated model structure in which weak equivalences and fibrations are created by ${\rnv{N}}:N\tT\to G(N)\tT$ from the $\cY_N$-model structure on $G(N)\tT$. The sets of generating cofibrations and acyclic cofibrations are $N\ii_{\cY_N}=\{q^*_N(G(N)/H)\times i_n\ {\mid}\ H\in \cY_N, n\in N\}$ and $N\jj_{\cY_N}=\{q^*_N(G(N)/H)\times j_n\ {\mid}\ H\in \cY_N, n\in N\},$ respectively. Moreover, the pair $\adj{q^*_N}{{\rnv{N}}}$ is a Quillen equivalence. 
	
	{\bf II.} \ $M\tT$ admits a cofibrantly generated model structure in which weak equivalences and fibrations are created by $\Res^M_N:M\tT\to N\tT$ from the model structure  on $N\tT$ given in {\bf I}.  The sets of generating cofibrations and acyclic cofibrations are  $M\ii_{\cY_N}=\{M\times_N q^*_N(G(N)/H)\times i_n\ {\mid}\ H\in \cY_N, n\in N\}$ and $M\jj_{\cY_N}=\{M\times_N q^*_N(G(N)/H)\times j_n\ {\mid}\ H\in \cY_N, n\in N\}$, respectively.

\end{proposition}

\begin{proof} For {\bf I} note that every object is already fibrant in $N\tT$, i.e., the identity is the fibrant replacement functor. For an $N$-space $X$ the  space $\Hom([0,1],X)$ with the pointwise action is a good path object in $N\tT$, i.e., $N\tT$ has functorial path objects. Hence, ${\rnv{N}}$ takes relative $N\jj_{\cY_N}$-complexes to weak equivalences.   Note that  $\rinv{N}{q^*_N(G(N)/H)}\cong G(N)/H$ for $H\leq G(N)$. For any regular cardinal $\lambda$, by Lemma  \ref{lem:rinv-is-relatively-small},  ${\rnv{N}}$  commutes with filtered colimits of $\lambda$-sequences in which underlying maps are relative $N\ii_{\cY_N}$ or $N\jj_{\cY_N}$-complexes.  It is known that for each $H\in \cY_N$ the $H$-fixed point functor commutes with  such sequential colimits, so both  $N\ii_{\cY_N}$ and  $N\jj_{\cY_N}$ permit the small object argument. This proves that the model structure can be transferred along $\rnv{N}$ and $\adj{q^*_N}{{\rnv{N}}}$ is a Quillen pair. Besides, for any $G(N)$-space $Y$, we have a $G(N)$-equivariant homeomorphism $ \hom_N(G(N),q^*_N(Y))\cong \hom_{G(N)}(G(N),q^*_N(Y))$ since any $N$-map between spaces with reversible $N$-actions is $G(N)$-equivariant. Therefore, the unit $Y\to \rinv{N}{q_N^*(Y)}$ is an isomorphism in $G(M)\tT$, so that the pair $\adj{q^*_N}{{\rnv{N}}}$ is a Quillen equivalence (see dual form of \cite[Lem. 1]{erdalguclukanilhan} or \cite[1.3.16]{hovey}).
	
	For {\bf II}, note that identity is again the fibrant replacement functor and the path space functor is obtained similarly. Besides, $\Res^M_N$ is also a left adjoint, so it preserves all colimits including the filtered ones. Therefore, the transfer argument applies to the adjoint pair $\adj{\Ind^M_N}{\Res^M_N}$ and we have a model structure on $M\tT$ induced by $\Res^M_N$. Since $\Ind^M_N(X)=M\times_N X$, the generating sets $M\ii_{\cY_N}$ and $M\jj_{\cY_N}$ are obtained as above.
\end{proof}

Let $(\cX,\cY)$ be a pair of collections (see Section \ref{ssect:induced-orbit-category} for the definition of a pair of collections). Define $\frX
: M\tT\to [{\cO_{(\cX,\cY)}}^{\opp},\tT]$ as follows: for $(N,H)$ in $\cO_{(\cX,\cY)}$ and $X$ in $ M\tT$ let $$\frX(X)(N,H)= \rinv{N}{\Res^M_N(X})^H\cong \Hom_M(M\times_Nq^*_N(G(N)/H),X)$$ and for every map $s$ in ${\cO_{(\cX,\cY)}}$ let $\frX(X)(s)$ be the induced map on contravariant-hom. A map $f:X\to Y$ in $ M\tT$ is sent to the obvious natural transformation of covariant-homs. This functor admits a left adjoint $$\Upsilon: [{\cO_{(\cX,\cY)}}^{\opp},\tT]\to M\tT : F\mapsto F((\mathbf{e},\mathbf{e})).$$ Since ${\cO_{(\cX,\cY)}}^{\opp}((\mathbf{e},\mathbf{e}),(\mathbf{e},\mathbf{e}))\cong \rinv{\mathbf{e}}{\Res^M_\mathbf{e} (M)^\mathbf{e}}\cong M,$
the $M$-action on $ F(\mathbf{e},\mathbf{e})$ is defined by compositions of these maps. The functor $\Upsilon$ sends a natural transformation $t:F\Rightarrow F'$ to the obvious induced map. Let $\varepsilon:\Upsilon \frX
\Rightarrow id$ be the natural transformation whose components are $\rinv{\mathbf{e}}{\Res^M_\mathbf{e}(X)}^\mathbf{e}\to X$. Define $\eta :id\Rightarrow \frX
\Upsilon $ so that $$\eta_F{(N,H)}:F (N,H)\to \frX
\Upsilon(F)(N,H)=\frX
( F((\mathbf{e},\mathbf{e})))(N,H)$$ is the map induced by $F(p):F(N,H)\to F(\mathbf{e},\mathbf{e})$ where $p:M\times_\mathbf{e} q^*_{\mathbf{e}}(G(\mathbf{e})/\mathbf{e}) \to M\times_N q^*_N(G(N)/H)$ is the map induced by $\mathbf{e}\into N$, i.e., the map corresponding to the class of $[\mathbf{e},\mathbf{e}H]$ under the canonical isomorphism $\Hom_M(M\times_\mathbf{e} q^*_{\mathbf{e}}(G(\mathbf{e})/\mathbf{e})  , M\times_N q^*_N(G(N)/H))\cong M\times_N q^*_N(G(N)/H) .$ One checks that $\eta$ is the unit and $\varepsilon$ is the counit of this adjunction. Note that $\frX
$ is the right Kan extension along the functor $\jmath:M\to \cO_{(\cX,\cY)}^{\opp}$ sending the single object of $M$ to $(\mathbf{e},\mathbf{e})$ and a morphism to the corresponding map with respect to the canonical isomorphism ${\cO_{(\cX,\cY)}}^{\opp}((\mathbf{e},\mathbf{e}),(\mathbf{e},\mathbf{e})) \cong M.$

Now we return Theorem \ref{thm:Rmodel} and its proof. 
\begin{proof}[Proof of Theorem \ref{thm:Rmodel}]
	Denote by $M\tT_{\cY_N}$ the model category given in part {\bf II} of Proposition \ref{prop:preliminarymodels}. Let $\Delta_\cX:M\tT\to \prod_{N\in \cX}M\tT_{\cY_N}$ be the diagonal functor. Note that $\Delta_\cX$ has a left adjoint given by taking coproduct over $\cX$. Observe that,  every object is fibrant; so we have a fibrant replacement functor, and  $\Hom([0,1],X)$ with the pointwise action is a good path object in $M\tT$ with these fibrations and weak equivalences, i.e., $M\tT$ has functorial path objects. The functor $\Delta_\cX$ admits both of its adjoints, so it preserves all colimits including the filtered ones. Hence, the transfer argument discussed in \ref{sssect:transfer-arguments} applies to $\Delta_\cX$, i.e., we can transfer a model structure on $M\tT$ from the product model structure on $\prod_{N\in \cX}M\tT_{\cY_N}$ via $\Delta_\cX$.  Observe that for every $f$ in $M\tT$, $\frX(f)$ is a weak equivalence (resp. fibration) in  the projective model structure on
	$[{\cO_{(\cX,\cY)}}^{\opp},\tT]$ if and only if  $\Delta_\cX(f)$ is a weak equivalence (resp. fibration) in the product model structure on $\prod_{N\in \cX}M\tT_{\cY_N}$. Thus, $M\tT$ admits a model structure in which weak equivalences and fibrations are created by $\frX$ from the projective model structure on
	$[{\cO_{(\cX,\cY)}}^{\opp},\tT]$. Moreover, the pair $\adj{\Upsilon}{\frX}$ is a Quillen pair.

	The projective model structure on $[{\cO_{(\cX,\cY)}}^{\opp},\tT]$ is cofibrantly generated, e.g., \cite[Lem. A.2.8.3]{lurie}. The set of generating cofibrations are $$\ii_{{\cO_{(\cX,\cY)}}}=\{{\cO_{(\cX,\cY)}}(-,(N,H))\times i_n  \ {\mid} \  n \in \bbN \}.$$  Let $F$ be a  $\ii_{{\cO_{(\cX,\cY)}}}$-cell complex, so that there exist a  regular cardinal $\lambda$ and a $\lambda$-sequence of maps $f_\alpha:F_{\alpha}\to F_{\alpha+1}$ for every $\alpha < \lambda $ with $F_{-1}=\emptyset$ fitting into the pushout diagrams of the form 
	$$\xymatrix{
		{\cO_{(\cX,\cY)}}(-,(N,H))\times \bbS^{n-1} \ar[d]_{Id\times i_n} \ar[r]^{ } &		F_{\alpha}\ar[d]^{f_\alpha}\\
		{\cO_{(\cX,\cY)}}(-,(N,H))\times  \bbD^{n}   \ar[r]^{} & 	F_{\alpha+1}}.$$
	where the colimit of $F_n$'s is $F$. For any space $A$, we have 
	\begin{align*} 
		\Upsilon({\cO_{(\cX,\cY)}}(-,(N,H))\times A) 
		&= {\cO_{(\cX,\cY)}}((\mathbf{e},\mathbf{e}),(N,H))\times A\\ 
		& \cong M\times_N q^*_N(G(N)/H) \times A.
	\end{align*} 
	Moreover, for any $(K,Q)$ we have
	\begin{align*} 
		{\cO_{(\cX,\cY)}}((K,Q),(N,H))\times A 
		&\cong (\rinv{K}{\Res^M_K(M\times_N q^*_N(G(N)/H))})^Q\times A \\ 
		&\cong (\rinv{K}{\Res^M_K(M\times_N q^*_N(G(N)/H))}\times A)^Q,
	\end{align*} 
	since the space $A$ can be considered with trivial action and ${\rnv{K}}$ and $(-)^Q$ are identity on such spaces.

	By Lemma \ref{lem:rinv-preserve-pushouts}, ${\rnv{M}}$ preserves pushouts of diagrams in which one leg is a generating cofibration. Same is true for  the restrictions $\Res^M_N(-)$ and the fixed point functors $(-)^H$ (see \cite{fausk} for the latter, which was originally due to G. Lewis). By Lemma \ref{lem:rinv-is-relatively-small}, for each $N\leq \cX$ and $H\in \cY_N$ the composition $(\rinv{N}{\Res^M_N(-)})^H$ preserves sequential colimits in which underlying maps are cofibrations. Clearly, $\frX$ preserves the initial object. Thus, $ \frX\Upsilon F$ is the direct limit over the transfinite composition of $\frX\Upsilon f_\alpha$'s obtained by pushout diagrams:
	$$\xymatrix{
		\frX\Upsilon {\cO_{(\cX,\cY)}}(-,(N,H))\times \bbS^{n-1} \ar[d]_{Id\times 	\frX\Upsilon i_n} \ar[r]^{ } &			\frX\Upsilon F_{\alpha}\ar[d]^{	\frX\Upsilon f_\alpha}\\
		\frX\Upsilon {\cO_{(\cX,\cY)}}(-,(N,H))\times  \bbD^{n}   \ar[r]^{} & 		\frX\Upsilon F_{\alpha+1}}.$$
	We have $\eta_\emptyset$ is an isomorphism. By transfinite induction, we obtain $\eta_F$ is an isomorphism. Every projectively cofibrant diagram is a retract of a $\ii_{{\cO_{(\cX,\cY)}}}$-cell complex, hence $\eta_F$ is an isomorphism for every projectively cofibrant diagram $F$.	 Since weak equivalences and fibrations of $M\tT$ are created by $\frX$, the pair $\adj{\Upsilon}{\frX}$ is a Quillen equivalence.
\end{proof} 
The model category $M\tT$ of Theorem \ref{thm:Rmodel} is already right proper since every object is fibrant. The model category $\prod_{N\in \cX}G(N)\tT$ is proper and topological since each factor in the product is so, see \cite[III Theorem 1.8]{mandell} and \cite[Appendix A-2, Proposition 2.5 and 2.6]{schwede}. The category $M\tT$ is powered and copowered over $\tT$ with powering defined via hom with pointwise $M$-action and copowering is defined as the cartesian product after regarding the topological space with the trivial $M$-action, i.e., if $A$ is in $\tT$ and $X$ is in $M\tT$ then the powering of $X$ by $A$ is the $M$-space $\hom(A,X)$ with pointwise action and the copowering of $X$ by $A$ is the product $A\times X$ with the $M$-action defined on the second coordinate. Note also that the powering in $G(M)\tT$ is also defined by hom and $\hom_M(G(M) ,{\hom(A,X)})\cong  \hom(A,\hom_M(G(M) ,{X}))$, see Lemma \ref{lem:powerover}. Thus, $\rnv{N}$ preserves powering for every $N\in \cX$. Being a right adjoint, $\rnv{N}$ preserves pullbacks, so the pull-back power axiom holds in $M\tT$ with this model structure (as it holds in $G(N)\tT$); and thus, the model category $M\tT$ is   topological. Since every object is fibrant, this means every $({\cX},{\cY})$-cofibration is an $h$-cofibration (\cite[Appendix A-1, Corollary 1.26]{schwede}) and by using the gluing lemma (\cite[Appendix A-2, Proposition 2.6]{schwede}) one gets that this model structure on $M\tT$ is left proper as well.
\subsection{Some remarks on generalizations}\label{ssect:generalizations}
Here we discuss two possible generalizations of our results. In order to avoid repetition, we only give rough sketches of the statements and ideas for proofs. 
\subsubsection*{$M$-objects in a model category}   An important  generalization of our model structure is to replace the ground category by a more general model category $\frC$, so that for a group $G$, the fixed point model structures on $G\frC:=[G,\frC]$ exists. A sufficient condition for existence of the fixed point model structures on $G\frC$ is given in \cite[Prop. 2.6]{stephan}. For $M$ and $(\cX,\cY)$  as in Section  \ref{sect:model-structures-on-m-spaces}, suppose that fixed point model structures exist on $G(N)\frC$ for every $N\in\cX$. Since $\frC$ is bicomplete, so are $M\frC$ and $G(M)\frC$. Thus, the right adjoint $\rnv{N}$ to the restriction $q^*_N:G(N)\frC\to N\frC$ exists. To generalize Theorem \ref{thm:Rmodel} on $M\frC$, it is enough to assume right adjoints to $q^*_N:G(N)\frC\to N\frC$ for all $N\in \cX$, $\Res^M_N$ and $\Delta_\cX$ preserve underlying fibrant replacements and path space objects. If the model structure on $\frC$ is not accessible but just cofibrantly generated, we also need to show that $q^*_N$ preserves smallness relative to cofibrations. The Quillen equivalence between $(\cX,\cY)$-model structure and induced orbit diagrams with projective model structure exists if (1) we have lemmas analogous to Lemma \ref{lem:rinv-preserve-pushouts} and Lemma \ref{lem:rinv-is-relatively-small} for $M\frC$ and (2) whenever $X$ is the domain or codomain of a generating (acyclic) cofibration in $\frC$, we have the existence of a natural isomorphism $(\rinv{K}{\Res^M_K(M\times_N q^*_N(G(N)/H))})^Q\otimes X \cong (\rinv{K}{\Res^M_K(M\times_N q^*_N(G(N)/H))}\otimes X)^Q$ for every $(N,H)$ and $(K,Q)$ in $\cO_{(\cX,\cY)}$. Under these conditions, the proof of the generalizations proceed similar to the proof of Theorem \ref{thm:Rmodel}.

\subsubsection*{Enriched case} There is a recently developed enriched version of the equivariant homotopy theory \cite{guillou2010enriched}. Therein, for a given bicomplete closed symmetric monoidal model category $\cV$, a certain type of group objects in $\cV$, called Hopf groups is introduced (as group objects in the symmetric monoidal category of cocommutative comonoids in $\cV$). For such a group object $G$, and a suitable $\cV$-enriched model category $\frC$, the category $G\frC$ of $G$-objects in $\frC$ (i.e., $\cV$-functors $G\to \frC$) admits a model structure generalizing the fixed point model structures, under certain conditions \cite[Sec. 3.2, Theorem 3.7]{guillou2010enriched}. Another possible generalization of the content of the present paper is to create model structures on $M\frC$, for sufficiently nice monoid objects $M$, where the homotopy theory is induced from the model structures of   \cite{guillou2010enriched}. However, group completions in the enriched setting often do not exist (e.g., for $\cV=\tT$ the algebraic group completions does not need to have a canonical topology making $q_M$ continuous). Even if there is an enriched group completion of a monoid object $M$ in $\cV$, we need to know that it is a Hopf group in $\cV$ and we need to find the set submonoids of $M$ (i.e., regular monomorphisms into $M$) whose group completions are Hopf groups. Several other conditions on $\cV$ and $\frC$ need to be imposed for existence of nice enriched Kan extensions along the group completions and suitability of these Kan extension for transfer arguments. It is difficult to determine all these conditions at once and most of the required work do not belong to general framework of this paper. On the other hand, this is a natural question that arise from the content of present paper. We leave these aspects for future work. The existence of such model categories would allow us, for example, to study modules over an associative algebra over a commutative ring by the enriched equivariant homotopy theories, as defined in \cite{guillou2010enriched}, of modules over a family of Hopf algebras.

\subsection{Some applications of $(\cX,\cY)$-model structure}\label{ssect:consequences}

Much of the constructions in standard equivariant homotopy theory  for group actions can readily be carried out for monoid actions in the setting of the homotopy theory of Theorem \ref{thm:Rmodel}.
Here we first give some immediate implications of the $(\cX,\cY)$-model structure being cofibrantly generated and then give some consequences of Theorem \ref{thm:Rmodel} analogous to the applications given for group actions in \cite{elmendorf}.

\subsubsection{$M_{(\cX,\cY)}$-$CW$-complexes and Whitehead Theorem}\label{sssect:cwcomplexes}
Let $(\cX,\cY)$ be as in Theorem \ref{thm:Rmodel}.  The $CW$-complexes can be constructed in the obvious way. A $M_{(\cX,\cY)}$-$CW$-complex $X$ is a sequential colimit of a $\omega$-sequence $c_\alpha:X_{\alpha}\to X_{\alpha+1}$ such that $X_{0}=\coprod_i M\times_{N_i} q^*_{N_i}(G(N_i)/H_i)$ for some $N_i\in \cX,\ H_i\in \cY_N$ for some indexing set and  $c_\alpha$ fits into a pushout diagram of the form
$$\xymatrix{
	\displaystyle 	\coprod_i M\times_{N_i} q^*_{N_i}(G(N_i)/H_i) \times \bbS^{n-1}  \ar[d]_{\coprod_i Id\times i_n} \ar[r]^{} &		X_{\alpha}\ar[d]^{c_\alpha}\\
	\displaystyle  \coprod_i M\times_{N_i} q^*_{N_i}(G(N_i)/H_i) \times \bbD^{n}  \ar[r]^{} & 	X_{\alpha+1}}.$$
for pairs $(N_i,H_i)$ varying over object of $\cO_{(\cX,\cY)}$.

An $M$-homotopy is a homotopy with respect to the interval $[0,1]$ with the trivial $M$-action, i.e., given $f_0,f_1:X\to X'$ an $M$-homotopy between $f_0$ and $f_1$ is an $M$-map $H:X\times [0,1]\to X'$ such that $H(-,0)=f_0$ and $H(-,1)=f_1$. It is straightforward to check that an $M$-homotopy equivalence is a weak equivalence in the $({\cX},{\cY})$-model structure for every $(\cX,\cY)$. The following corollary directly follows from  Theorem \ref{thm:Rmodel} and general properties of cofibrantly generated model categories (\cite[Lem. 4.24]{dwyer}).
\begin{corollary}\label{cor:whitehead}
	A weak equivalence $f:X\to X'$ in the  $({\cX},{\cY})$-model structure between $M_{(\cX,\cY)}$-$CW$-complexes is an $M$-homotopy equivalence. 
\end{corollary}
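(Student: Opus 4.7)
The plan is to recognize the statement as an instance of the abstract Whitehead theorem applied to the model structure on $M\tT$ from Theorem \ref{thm:RmodelTop}. First, I would observe that every $M_{(\cZ,\cY)}$-$CW$-complex is cofibrant: by construction, such an $X$ is a retract of a transfinite composition of pushouts of maps in $M\ii_{(\cZ,\cY)}$ along the unique map from $\emptyset$, and since cofibrations are closed under pushouts, transfinite compositions, and retracts in any cofibrantly generated model category, the map $\emptyset\to X$ is a cofibration. On the other hand, as noted in the proof of Theorem \ref{thm:RmodelTop}, every object in $M\tT$ is fibrant. Thus $f:X\to X'$ is a weak equivalence between bifibrant objects, and the abstract Whitehead theorem (cited as \cite[Lemma 4.24]{dwyer}) guarantees that $f$ admits a homotopy inverse in the model-categorical sense.

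The remaining task is to translate the model-categorical notion of homotopy into the concrete $M$-homotopy via the interval $[0,1]$ with trivial action. The proof of Theorem \ref{thm:RmodelTop} already exhibits $\cP A=\Hom([0,1],A)$, with the pointwise $M$-action, as a good path object for $\rwezy$ and $\rfibzy$. By the standard internal-hom adjunction in $\tT$, an $M$-equivariant map $H:X\to \cP X'$ corresponds precisely to an $M$-equivariant continuous map $X\times[0,1]\to X'$ where $[0,1]$ carries the trivial $M$-action; that is, to an $M$-homotopy in the sense of the statement. Hence right homotopies through $\cP X'$ and $M$-homotopies in the stated sense coincide.

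Finally, since $X$ is cofibrant and $X'$ is fibrant, left and right homotopies agree and define an equivalence relation on $M$-maps $X\to X'$, and inverses in the homotopy category are represented by genuine homotopy inverses. Therefore the homotopy inverse supplied by the abstract Whitehead theorem is realized as an $M$-map $g:X'\to X$ together with $M$-equivariant maps $X\times[0,1]\to X$ and $X'\times[0,1]\to X'$ witnessing $gf\simeq \mathrm{id}_X$ and $fg\simeq \mathrm{id}_{X'}$, completing the proof.

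The only point that requires any care, rather than an obstacle proper, is the identification of the abstract notion of homotopy with the explicit $M$-homotopy via $[0,1]$; this rests on the already-established good path object $\Hom([0,1],-)$ together with the bifibrancy of $X$ and $X'$. Once this identification is made, the argument is purely formal.
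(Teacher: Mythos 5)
Your proposal is correct and follows essentially the same route as the paper, which simply cites Theorem \ref{thm:RmodelTop} together with the abstract Whitehead theorem for cofibrantly generated model categories (\cite[Lemma 4.24]{dwyer}). The details you supply---cofibrancy of $M_{(\cZ,\cY)}$-$CW$-complexes, fibrancy of all objects, and the identification of model-categorical homotopy with $M$-homotopy via the path object $\Hom([0,1],-)$ (equivalently the cylinder $X\times[0,1]$ with trivial action)---are exactly the intended justification left implicit in the paper.
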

This is a form of   Whitehead Theorem for the model structure of Theorem \ref{thm:Rmodel}. In particular, $M$-homotopy type of an $M$-space  $X$ that is $M$-homotopy equivalent to an $M_{(\cX,\cY)}$-$CW$-complex reduces to homotopy type of the presheaf  $\frX(X):(N,H)\mapsto (\rinv{N}{(\Res^M_N(X))})^H$ for   $N\in \cX$ and $H\in \cY_N$ in the projective model structure of presheaves.

\subsubsection{A remark on Elmendorf's construction}\label{sssect:barconstruction}
Define a functor $$Y:{\cO_{(\cX,\cY)}} \to M\tT : (N,H)\mapsto M\times_N q^*_N(G(N)/H).$$ For $F: {\cO_{(\cX,\cY)}}^{\opp}\to \tT $, define $\Psi F$ to be the $M$-space $\bB({Y,{\cO_{(\cX,\cY)}},F})$, by the two-sided categorical bar construction, see \cite[Ch. V, 2.1]{may}.   The $M$-space $\bB({Y,{\cO_{(\cX,\cY)}},F})$ is the geometric realization of the simplicial $M$-space $\bB_*({Y,{\cO_{(\cX,\cY)}},F})$ whose $k$-simplices are given by  \begin{align*} \bB_k({Y,{\cO_{(\cX,\cY)}},F}) &=\coprod_{s:[k]\to \cO_{(\cX,\cY)}} M\times_{N_0} q^*_{N_0}(G(N_0)/H_0)\times F({N_k,H_k}) 
\end{align*} 
where $s(i)=(N_i,H_i)$. Since $\rnv{M}$ preserves coproducts, we have $$\bB_*({{\rnv{M}}{Y},{\cO_{(\cX,\cY)}},F})\cong\rinv{M}{\bB_*(Y,{\cO_{(\cX,\cY)}},F)}$$  as simplicial $G(M)$-spaces, where an element $(\sigma,t)\in\bB_k({{\rnv{M}}{Y},{\cO_{(\cX,\cY)}},F})$	is mapped to $s_\sigma\in \rinv{M}{\bB_k(Y,{\cO_{(\cX,\cY)}},F)}$ with $s_\sigma(g)=(\sigma(g),t)\in \bB_k(Y,{\cO_{(\cX,\cY)}},F).$

Assume that   $F({N,H})$ is a $CW$-complex for each $(N,H)$ in $\cO_{(\cX,\cY)}$.  The $M$-action on $\bB_k({Y,{\cO_{(\cX,\cY)}},F})$ is defined only on the first coordinate, implying that the $M$-action on $\bB_*({Y,{\cO_{(\cX,\cY)}},F})$ is simplicial; and thus, passes through the geometric realization  (see also \cite[proof of Thm.1]{elmendorf}).  Then the $M$-space  $M\times_{N_0} q^*_{N_0}(G(N_0)/H_0)\times F({N_k,H_k})$ is a $M_{(\cX,\cY)}$-$CW$-complex. Due to  \cite[Lem. 5.2.1 and Rem. 5.2.2]{riehl2014categorical} (see also \cite[Ex. 23.8]{shulman}) the simplicial $M$-space $\bB_*(Y,{\cO_{(\cX,\cY)}},F)$ is Reedy cofibrant, so that  $\bB({Y,{\cO_{(\cX,\cY)}},F})$ is a cofibrant $M$-space (here note also that the model structure on $M\tT$ is topological). This also implies that, when filtered by simplicial degree, the inclusions of skeleta are ${(\cX,\cY)}$-cofibrations, since the pushout products of latching maps $\ell_n:L_n(\bB_*({Y,{\cO_{(\cX,\cY)}},F})) \to \bB_n({Y,{\cO_{(\cX,\cY)}},F})$ and $i_n:{\mid}\partial\Delta^n{\mid}\to {\mid}\Delta^n{\mid}$ are ${(\cX,\cY)}$-cofibrations. By Lemmas \ref{lem:rinv-preserve-pushouts} and \ref{lem:rinv-is-relatively-small} we obtain  $$\bB({{\rnv{M}}{Y},{\cO_{(\cX,\cY)}},F})\cong\rinv{M}{\bB(Y,{\cO_{(\cX,\cY)}},F)}.$$
Since restrictions and fixed point functors commute with with the bar-construction (see, e.g., \cite[Proof of Thm. 1]{elmendorf}) we have an isomorphism 
$$\bB(\rinv{N}{\Res^M_N(Y-)^H},{\cO_{(\cX,\cY)}},F)\cong\rinv{N}{\Res^M_N(\bB({Y,{\cO_{(\cX,\cY)}},F}))}^H.$$
We have a natural bijection 
\begin{align*} 
	(\rinv{N}{\Res^M_N(Y(K,Q))})^H &\cong\rinv{N}{\Res^M_N(M\times_K q^*_{K}(G(K)/Q))})^H \\ 
	& \cong \Hom_M(M\times_N q^*_N(G(N)/H),M\times_K q^*_{K}(G(K)/Q))\\
	&\cong{\cO_{(\cX,\cY)}}((N,H),(K,Q)) ,
\end{align*} 
for every $(K,Q)$ in ${\cO_{(\cX,\cY)}}$. Combining with the isomorphism established above, we have  $$
(\rinv{N}{\Res^M_N\bB({Y,{\cO_{(\cX,\cY)}},F})})^H\cong\bB(\cO_{(\cX,\cY)}((N,H),-),{\cO_{(\cX,\cY)}},F)
$$
By \cite[Ch. V. 2.2]{may}, we have a homotopy equivalence $$\varepsilon: \bB(\cO_{(\cX,\cY)}((N,H),-),{\cO_{(\cX,\cY)}},F) \to F(N,H),$$ which is natural in $(N,H)$. Being a left Quillen functor, $\Upsilon$ preserve weak equivalences between cofibrant objects. Thus, $\Upsilon \varepsilon : \Psi F \to \Upsilon F$ is an weak equivalence for any cofibrant $F$.

Now, assume that $X$ is an $M_{(\cX,\cY)}$-$CW$-complex for which  $\rinv{K}{\Res^M_K(X)})$ is a $G(K)$-$CW$-complex with respect to $\cY_K$-model structure. Then $\rinv{K}{\Res^M_K(X)})^H$ is of the homotopy type of a $CW$-complex.   Letting $F=\frX  X$ in the above natural weak equivalence, we obtain a weak equivalence  $\Upsilon \varepsilon: \Psi \frX
X \to  X$. Therefore, following the steps in proof of Theorem 2 in \cite{elmendorf}, we get a natural bijection 
\begin{equation}\label{eqn:elmendorf}[X,\Psi F]_{(\cX,\cY)}\cong [\frX X, F]_{\cO_{(\cX,\cY)}},
\end{equation}
where $[-,-]_{(\cX,\cY)}$ and $[-, -]_{\cO_{(\cX,\cY)}}$ homotopy classes of maps in the respective homotopy categories.
\begin{remark}
	A sufficient condition for $\rinv{K}{\Res^M_K(X)})$ being a $G(K)$-$CW$-complex (with respect to $\cY_K$-model structure) for an $M_{(\cX,\cY)}$-$CW$-complex $X$ is  that   the subspace topology of the prodiscrete topology on hom-sets of ${\cO_{(\cX,\cY)}}$ agrees with the discrete topology, i.e., $(\rinv{K}{M\times_Nq^*_N(G(N)/H)})^Q$ is discrete  for every $(N,H),\ (Q,K) \in {\cO_{(\cX,\cY)}}$. For every monoid $M$, there are pairs of collections satisfying this condition and for commutative cancellative monoids, this condition holds for every pair of collections (see the last paragraph of \ref{ssect:induced-orbit-category}). However, an infinitely generated and infinitely presented monoid can be constructed as a counterexample, since $A$ is discrete does not in general implies $\rinv{M}{A}$ is so.
\end{remark}
\subsubsection{Classifying spaces for families of pairs}\label{sssect:classifying-spaces}
By using Theorem \ref{thm:Rmodel}, one can define classifying space for a given pair of collections just as in \cite{elmendorf}. Let $(\cX,\cY)$ be a pair of collections. 
A pair of subcollections $(\cX_0,\cY_0)$ in $(\cX,\cY)$ is called a \emph{pair of families relative to $(\cX,\cY)$} if  the full subcategory $\cO_{(\cX_0,\cY_0)}$ is a sieve in $\cO_{(\cX,\cY)}$ in the sense \cite[Def. 6.2.2.1]{lurie}. Then the functor $E_{(\cX,\cY)}:{\cO_{(\cX,\cY)}}^{\opp}\to \tT$  given by  
$$E_{(\cX_0,\cY_0)}(N,H)=  \begin{cases}
	*  & \text{if } (N,H)\in \cO_{(\cX_0,\cY_0)} \\
	\emptyset & \text{if }  (N,H)\notin \cO_{(\cX_0,\cY_0)}  
\end{cases}$$
is well defined.  We say an $M_{(\cX,\cY)}$-$CW$-complex $X$ is \emph{$(\cX_0,\cY_0)$-isotropic relative to $(\cX,\cY)$} if for every $(N,H)\in \cO_{(\cX,\cY)}$ $(\rinv{N}{\Res^M_N(X)})^H\neq \emptyset$ implies $(N,H)\in \cO_{(\cX_0,\cY_0)}$.

Following  \cite[Sec. 2]{elmendorf}, one defines \emph{the classifying space for $(\cX_0,\cY_0)$} by the two-sided categorical bar construction $\bB({Y,{\cO_{(\cX,\cY)}},E_{(\cX_0,\cY_0)}})$, where $Y$ is the functor given by $Y(N,H)= M\times_N q^*_N(G(N)/H)$.

From Theorem \ref{thm:Rmodel} (and Section \ref{sssect:barconstruction}) for every $(\cX_0,\cY_0)$-isotropic $M_{(\cX,\cY)}$-$CW$-complex $X$ satisfying Equation \ref{eqn:elmendorf} in \ref{sssect:barconstruction}, $[X,\bB({Y,{\cO_{(\cX,\cY)}},E_{(\cX_0,\cY_0)}})]_{(\cX,\cY)}$ is a singleton, which is naturally isomorphic to $[\frX X,E_{(\cX_0,\cY_0)}]_{\cO_{(\cX,\cY)}}$. In fact, if $(N,H)\notin\cO_{(\cX_0,\cY_0)}$ then $\frX X(N,H)=(\rinv{N}{\Res^M_N(X))}^{H}= \emptyset$.

\subsubsection{Eilenberg-Maclane $M$-spaces}\label{sssect:eilenberg-maclane-spaces}
Similarly, one defines Eilenberg-Maclane $M$-spaces for in $({\cX}^{all},{\cY}^{all})$-model category  by just following the footsteps of \cite[Sec. 2]{elmendorf}. Here $({\cX}^{all},{\cY}^{all})$ is the pair of collections with $\cO_{({\cX}^{all},{\cY}^{all})}=\cO(M:q)$. Given an $\cO(M:q)$-group $\Lambda$ (i.e., a functor $\Lambda:\cO(M:q)\to \Grp$) and a positive integer $n$ (where $\Lambda$  is abelian for $n>1$), an Eilenberg-Maclane $M$-space for $\Lambda$ is an  $M_{({\cX}^{all},{\cY}^{all})}$-$CW$-complex $X$ such that for each object $(N,H)$ in $\cO(M:q)$ we have $(\rinv{N}{\Res^M_N X})^H\cong K(\Lambda(N,H),n)$ and $\Lambda = \pi_n\circ \frX X$. The construction can be obtained by the Bar construction for the compositions $K(-,n)\circ \Lambda$ just as in  \cite[Sec. 2]{elmendorf} and is unique up to $M$-homotopy due to Corollary \ref{cor:whitehead} above.

\subsubsection{Bredon-like cohomology of $M$-spaces}\label{sssect:mcohomology}
We use an approach similar to definition of Bredon cohomology in \cite{moerdijk1993equivariant} to define cohomology of $M$-spaces for the homotopy theory of Theorem \ref{thm:Rmodel}.  Let $\mathbf{S}_*:\tT\to \sS$ be the singular functor. Given a space $A$ there exist a category $\Simp(A)$, called the simplex category, whose objects are maps $\sigma:\Delta^n\to \mathbf{S}_*A$ from a standard simplicial simplex to $\mathbf{S}_*A$ and whose morphisms are obvious commuting triangles (i.e., the comma category for inclusions of $\mathbf{S}_*A$ and $\Simp$ into $\sS$). This construction defines a functor $\Simp:\tT\to \Cat$. Note that $\mathbf{N}\Simp(A)$ is weak equivalent to $ \mathbf{S}_*A$ as simplicial sets, where $\mathbf N$ denotes the nerve functor (see, e.g., \cite[Thm. 19.9.3]{hirschhorn}).

For a given small category $\cC$ and a functor $F:\cC^{\opp}\to \Cat$ the Gr\"othendieck construction over $F$ is a small category $\int F$ whose objects are pairs $(c,d)$ where $c$ is an object in $\cC$ and $d$ is an object in  $Fc$ and a morphism between such pairs is a pair morphisms $(f,g):(c,d)\to (c',d')$ where $f$ is a morphism in $\cC$ and $g:d\to F(f)(d')$ a morphism in $Fc$  (see, e.g., \cite{thomason} or \cite{elephant}). The Gr\"othendieck construction is equipped with a canonical projection given by $p_\cC:\int F\to \cC:(c,d)\mapsto c$.

For a given $M$-space $X$ define the $M$-equivariant simplex category $\Simp_M(X)$  as the Gr\"othendieck construction for the composition $\Simp\frX(X):{\cO_{(\cX,\cY)}}^{\opp}\to \Cat$, i.e., $$\Simp_M(X)=\int \Simp \frX(X)=\int_{(N,H)} \Simp ((\rinv{N}{\Res^M_N X})^H).$$
Similarly, letting $\pi:\tT\to \Cat$ denote the fundamental groupoid functor, one defines, $\pi_M(X)$, the $M$-equivariant fundamental groupoid (for the homotopy theory in Theorem \ref{thm:Rmodel}) as the Gr\"othendieck construction for the composition $\pi\frX(X):{\cO_{(\cX,\cY)}}^{\opp}\to \Cat$, i.e., $$\pi_M(X)=\int \pi \frX(X).$$ Then we have functors $$\Simp_M(X)\stackrel{\omega_X}\to \pi_M(X)\stackrel{p_X}\to \cO_{(\cX,\cY)},$$ where the first functor is induced by the natural transformation $\Simp(-)\Rightarrow \pi(-)$ as described in \cite[pp. 267]{moerdijk1993equivariant} on fibers. We define cohomology theories in this setting by following \cite[Sec. 1 and 2]{moerdijk1993equivariant}: for a given local coefficient system $L$, i.e., a functor $L:\pi_M(X)\to \Ab$, we define the $M$-cohomology of $X$ with coefficients in $L$, $H^n_M(X,L)$, as cohomology of the category $\Simp_M(X)$ with respect to the coefficients $L$, i.e., cohomology of the cochain complex $C_{L\omega_X}^*$ with $$C_{L\omega_X}^n=\prod_{\varsigma\in \mathbf{N}\Simp_M(X)_n}L\omega_X(\varsigma(0))$$ where $\mathbf{N}\Simp_M(X)$ denote the nerve of $\Simp_M(X)$ and $\varsigma=\varsigma(0)\to \varsigma(1)\to \cdots \varsigma(n)$ is an $n$-simplex in $\mathbf{N}\Simp_M(X)$. The differentials of this cochain complex are obtained by taking the alternating sum of face maps in the nerve, as usual.

If a coefficient system $L$ is constant, i.e., factors through the projection $p_X:\pi_M(X)\to \cO_{(\cX,\cY)},$ then the cohomology theory as defined above is homotopy invariant. In fact, if $f:X\to Y$ is a weak equivalence between  $M_{(\cX,\cY)}$-$CW$-complexes, then $\mathbf{N}\Simp((\rinv{N}{\Res^M_N f})^H)$ is a weak equivalence of simplicial sets, which implies $\mathbf{N}\Simp_M(f)$ is a weak equivalence of simplicial sets. Thus, $H^n_M(f,L)$ is an isomorphism for every $n\in \bbN$.
\subsubsection*{Serre spectral sequence and homotopy invariance of $H^n_M$}
For the homotopy invariance of $H^n_M$  with local coefficients, we use adaptations of the arguments in \cite[Sec. 2.3 and 2.4]{moerdijk1993equivariant}. We construct the Serre spectral sequence for $H^n_M$ and $(\cX,\cY)$-fibrations. Let $X$ be an $M_{(\cX,\cY)}$-$CW$-complex. By definition of $\Simp_M(X)$ combined with \cite{thomason}, the weak equivalence $$\displaystyle\hocolim_{\Simp((\rinv{N}{\Res^M_N(X)})^H)}pt\cong (\rinv{N}{\Res^M_N(X)})^H$$ (see, e.g., \cite{goerss}) give rise to a weak equivalence $$\displaystyle\hocolim_{\Simp_M(X)}Yp_{\cO_{(\cX,\cY)}}\cong X.$$
Here $Y$ is the functor defined as in \ref{sssect:barconstruction}. Given a  $(\cX,\cY)$-fibration $f:E\to X$ in $M\tT$ and an object $((N,H),\sigma:\Delta^n\to \mathbf{S}_*(\rinv{N}{\Res^M_N X})^H)$ in $\Simp_M(X)$, one obtains a homotopy pullback diagram 
$$\xymatrix{
	f^*(\bar\sigma)  \ar[d]_{} \ar[r]^-{ \tilde\sigma} &		E\ar[d]^{f}\\
	M\times_N q^*_N(G(N)/H) \times \Delta^{n}  \ar[r]_-{\bar\sigma} & 	X}$$
Here $\bar\sigma$ is obtained from the isomorphism $$\Hom_M(M\times_N q^*_N(G(N)/H)\times \Delta^n,X)\cong \Hom(\Delta^n,(\rinv{N}{\Res^M_N X})^H).$$ Define a functor $T:\Simp_M(X)\to \tT$ by $T(((N,H),\sigma))=f^*(\bar\sigma)$. Then, as above, the weak equivalence on fibers (see \cite[pp. 247]{goerss}) give rise to a weak homotopy equivalence $$\displaystyle\hocolim_{\Simp_M(X)}T\cong E.$$ The Serre spectral sequence for $H_M^*$ follows as a direct consequence of the homotopy colimit spectral sequence \cite[Ch. IV, 5.1]{goerss} (or directly as in \cite[Thm. 3.2]{moerdijk1993equivariant}). For a given local coefficient system $L$ for $E$ this natural spectral sequence reads $$E_2^{p,q}=H^p_M(X,H^q_M(f^{*},L))\Rightarrow H^{p+q}_M(E,L)$$ where 
$H^q(f^{*},M):\Simp_M(X)\to \Ab$ is the coefficient system given by  $$(N,H,\sigma)\mapsto H^q_M(f^*(\bar\sigma),L\omega_X\Simp_M(\tilde\sigma)).$$

The homotopy colimit spectral sequence follows from the skeletal filtration of the bar construction whose realization is the homotopy colimit. Let $E=X$ and $f=id$. We construct a bisimplicial set $X(*,*)$ by the two sided simplicial bar construction $\bB_*(\mathbf{S}_*Yp_{\cO_{(\cX,\cY)}},\Simp_M(X),*)$ so that $\diag(X(*,*))\cong \mathbf{N} X\cong \mathbf{N}\Simp_M(X)$, e.g. \cite[Cor. 5.13]{riehl2014categorical}.  We have
\begin{align*} X(p,q)&=\coprod_{s:[q]\to \Simp_M(X)}\mathbf{S}_*(M\times_{N_0}q^*_{N_0}(G(N_0)/H_0))_p \end{align*}  
An element of $X(p,q)$ can be realized as a triple $$(s,u,\sigma) \in (\mathbf{N}\cO_{(\cX,\cY)})_q\times(\mathbf{N}\Delta)_p  \times \Hom( \Delta^{u(p)},(\rinv{N}{\Res^M_N X})^H).$$
Let $C^{p,q}(X,L)$ be the associated double complex so that $$H^n(\Tot(C^{*,*}(X,L)))\cong H^n_M(X,L),$$ (compare \cite[Proof of 2.2, Eqn. (23) and (25)]{moerdijk1993equivariant}). The rows of $C^{p,q}(X,L)$ are exact, i.e., the associated (horizontal) spectral sequence collapses. The  vertical spectral sequence is of the form 
$$E_1^{p,q}=\prod_{s\in(\mathbf{N}\cO_{(\cX,\cY)})_q }H^p(\frX X(s(q)), L^s)\Rightarrow H^{p+q}_M(X,L),$$ where $L^s$ is the functor given by the composition of $L$ with the endofunctor on $\Simp_M(X)$ induced by the composition of maps in $s$. The cohomology in the product denotes the ordinary twisted cohomology. Since the weak equivalences in $M\tT$ are defined via $\frX$ and the ordinary twisted cohomology is homotopy invariant, homotopy invariance of $H^{n}_M(-,L)$ follows.

\subsection{Concluding remarks on further applications}\label{sssect:remarks-on-applications} 
The difficulty of studying monoid actions over group actions is on par with the difficulty of studying all maps over studying only isomorphisms. For example a pair of distinct orbits of a monoid action do not need to be disjoint, so there is no efficient orbit decomposition for a monoid action. The functor $\rnv{M}$ sorts out invariant subspaces of an $M$-space on which $M$ acts by homeomorphisms and maximal with respect to this property. Since it is a right adjoint to the forgetful functor $q_M^*$, we can say that $\rnv{M}$ transforms a logically irriversible process into a reversible one by preserving the essential information as much as possible. Here note that in nature almost all processes are irreversible.

Recall that for an $M$-space $X$, the image of $\epsilon_X$ is the maximal $M$-invariant subspace of $X$ (see Proposition \ref{prop:imepsilonisMinvariant}). By restriction to submonoids $N\leq M$ and applying $\rnv{N}$, we can in fact decompose $X$ further into $N$-invariant subspaces (with respect to the restricted action of $N$) on which $N$ acts by homeomorphisms. Therefore, the material discussed in Sections \ref{sect:preliminaries} and \ref{sect:model-structures-on-m-spaces} has potential applications on dynamical properties of irreversible  systems that  depend on invariance, orbits types and transitivity of the actions (such as,  recurrence, stability,  observability, controllability, see also \cite{souza2020chaos,mittenhuber1995semigroup,sain1969invertibility}). We briefly sketch some  immediate ones below.

\subsubsection*{Attractors}\label{sssect:attractors}
Roughly speaking, an attractor of a dynamical system is a subset that is invariant under the action that contains limits points of a neighborhood. For the classical notion we refer to \cite{milnor1985concept}. Most basic case is when we consider dynamics of single map $f:X\to X$. For $B\subseteq X$ compact subset such that $f(B)\subseteq \overset{\circ}B$, a (trapped) attractor for this system is the intersection $A=\bigcap_{n\in \bbN}f^{\circ n}(B)$ (here $f^{\circ n}$ is the $n$-fold composition of $f$). It can be seen that $f(A)=A$, so this set is invariant under the $\bbN$-action generated by $f$, i.e. the action given by $n\cdot x=f^{\circ n}(x)$. Besides, it is the maximal one in $B$. Therefore $A=\epsilon_B(\rinv{\bbN}{B})$. In fact, $x\in A$ if and only if $\forall n\in \bbN$, $\exists b\in B$, $f^{\circ n}(b)=n\cdot b=x$, and this holds if and only if $\exists \sigma\in \epsilon_B(\rinv{\bbN}{B}) $ with $x=\sigma(1)$ (here note that $b=\sigma(-n)$). 

Later, the notion is generalized to semigroup actions, e.g. see  \cite{souza2017existence}. There, the notion of global (uniform) attractor is also defined. For an $M$-space $X$  its global uniform attractor, if exists, can be obtained as the reunion of its $M$-invariant compact subsets \cite[Thm. 2.4]{souza2017existence}. Thus, the global uniform attractor can be seen as the image of a compact $G(M)$-invariant subspace of  $\rinv{M}{X}$ under $\epsilon_X$. In fact, if $A\neq \emptyset$ is the global uniform attractor, then by definition it is a  compact $M$-invariant subspace of $X$. By Lemma \ref{lem:rinv-is-closed}, $\rinv{M}{A}$  can be regarded as a compact subspace of $\rinv{M}{X}$. By  Proposition \ref{prop:imepsilonisMinvariant} we have $\epsilon_X(\rinv{M}{A})=A$.  We also have $\epsilon_A:q^*_M(\rinv{M}{A})\to A$ is a weak equivalence in the model structure of Proposition \ref{prop:preliminarymodels} part I. This helps us to determine homotopical properties of such attractors.

We also make following observation for one parameter systems. For the following remarks `a system'  means a $\bbN$-action on a space.
\subsubsection*{Periodicity and immortality}\label{sssect:perimm} 
Periodicity and immortality are two important dynamical properties of a given system. These phenomena are studied in \cite{kari2008periodicity} for reversible Turing machines and \cite{hooper1966undecidability} for irreversible ones.  For a given irreversible system generated by a (non-invertible) map $\phi:X\to X$, $\rinv{\bbN}{X}$ is a reversible system consisting of periodic and immortal components. The type of these components can be understood easily via the equivariant homotopy type. In fact, immortal components have empty fixed point sets for any nontrivial subgroup $H\leq \bbZ$. For example if $(\bbN,X)$ is as in Example \ref{ex:RUS}, then  $\rinv{\bbN}{X}$ consists of a (uniformly) periodic and an immortal component, if $(\bbN,Z)$ is as Example \ref{ex:RuR} then $(\bbZ,\rinv{\bbN}{Z}\cong \bbR\amalg \bbR\amalg \bbR)$ contains three immortal components but not a periodic one.

\subsubsection*{Time-reversal symmetries}\label{sssect:time-reversal} 
Time-reversal symmetries of dynamical systems are important types of basic symmetries that appear in natural sciences. For a general survey on time-reversal symmetries we refer to \cite{lamb1998time}. Let $\phi:X\to X$ be a given homeomorphism generating a discrete dynamical system $(\bbZ,X)$. A time-reversal symmetry of  $(\bbZ,X)$ is a homeomorphism  $r:X\to X$ so that $r\circ \phi = \phi \circ r$. Several examples and consequences of the notion is discussed in  \cite{lamb1998time}. In order to discuss time-reversal symmetries of a system, we need the system to be reversible. Thus, for a general irreversible system $(\bbN, X)$ given by an action of $\bbN$ we cannot discuss time-reversal symmetries unless $\bbN$ acts by homeomorphisms. On the other hand, we can always discuss time-reversal symmetries after applying $\rinv{\bbN}{-}$. For example, if $(\bbN,Z)$ is as Example \ref{ex:RuR}, then the derived system $(\bbZ,\rinv{\bbN}{Z}\cong \bbR\amalg \bbR\amalg \bbR)$ admits a time-reversal symmetry given by $r:t\mapsto -t$ where $t$ belong to either of the three connected components.

\bibliographystyle{plain}
\bibliography{els-monactbygrpact-refs}

\end{document}